 \newcommand{\Tr}{\mathrm{Tr}\,}
\newtheorem{theorem}{Theorem}[section] 
\newtheorem{lemma}[theorem]{Lemma}
\newtheorem{proposition}[theorem]{Proposition}
\newtheorem{algo}[theorem]{Algorithm}
\theoremstyle{definition} 
\newtheorem{remark}[theorem]{Remark}
 \newcommand{\supp}{\operatorname{supp}}
\newcommand{\e}{\varepsilon}
\newcommand{\R}{\mathbb{R}} 
\newcommand{\N}{\mathbb{N}} 
\renewcommand{\d}{\mathrm{d}} \renewcommand{\L}{\mathbb{L}}
 \newcommand{\id}{\mathrm{Id}}
\newcommand{\dL}{\mathrm{d}\mathcal{L}} 
\renewcommand{\L}{{\mathscr L}} \newcommand\wto{\rightharpoonup}
\newcommand\wsto{\stackrel{*}{\rightharpoonup}}
\newcommand{\PA}{\mathrm{PA}}
\newcommand{\PAM}{\mathrm{PAM}}
\newcommand{\MBV}{\mathrm{MBV}}
\newcommand\blfootnote[1]{%
  \begingroup
  \renewcommand\thefootnote{}\footnote{#1}%
  \addtocounter{footnote}{-1}%
  \endgroup
}
\def\widebreve{\mathpalette\wide@breve}
\def\wide@breve#1#2{\sbox\z@{$#1#2$}%
     \mathop{\vbox{\m@th\ialign{##\crcr
\kern0.08em\brevefill#1{0.8\wd\z@}\crcr\noalign{\nointerlineskip}%
                    $\hss#1#2\hss$\crcr}}}\limits}
\def\brevefill#1#2{$\m@th\sbox\tw@{$#1($}%
  \hss\resizebox{#2}{\wd\tw@}{\rotatebox[origin=c]{90}{\upshape(}}\hss$}
\begin{document}

\title[Asymptotic meshes from $r$-variational adaptation]{Asymptotic meshes from $r$-variational adaptation methods for static problems in one dimension}

\author[D. Hun]{Darith Hun}
\address[]{Université catholique de Louvain, Belgium}
\email[Darith Hun]{darith.hun@uclouvain.be}
\author[N. Moës]{Nicolas Moës}
\address[]{Université catholique de Louvain, Belgium}
\email[Nicolas Moës]{nicolas.moes@uclouvain.be}
\author[H. Olbermann]{Heiner Olbermann}
\address[]{Université catholique de Louvain, Belgium}
\email[Heiner Olbermann]{heiner.olbermann@uclouvain.be}

%%----------------------------------------------------------------------------------------------------------------------

\renewcommand{\chi}{\mathds{1}}

\maketitle

\begin{abstract}
 We consider the minimization of integral functionals in one dimension and their approximation by $r$-adaptive finite elements. Including the grid of the FEM approximation as a variable in the minimization, we are able to show that the optimal grid configurations have a well-defined limit when the number of nodes in the grid is being sent to infinity. This is  done by showing that the suitably renormalized energy functionals possess a limit  in the sense of $\Gamma$-convergence. We provide numerical examples showing the closeness of the optimal asymptotic mesh obtained as a minimizer of the $\Gamma$-limit to the optimal finite meshes.  
\end{abstract}

{\small
\noindent \keywords{\textbf{Keywords:} Adaptive finite element method, $r$-adaptivity, asymptotic limit, $\Gamma$-convergence}
}

\blfootnote{ D.H. and N.M. acknowledge support for this project through funding from the European Research Council (ERC) under the European Union’s Horizon research and innovation program (Grant agreement No. 101 071 255).}

\section{Introduction}

\subsection{Motivation}
In the study of minimization problems defined over suitable sets of  functions $X$ ,
\begin{equation}\label{eq:22}
    \inf_{u \in X} E(u),
\end{equation}
finite element methods are widely employed to approximate solutions. These methods involve restricting minimization to finite-dimensional affine subspaces $X_h$ of $X$, often defined by a set of parameters that govern the mesh. An adaptive finite element method includes optimizing the  choice of the finite-dimensional subspace $X_h$ by appropriately selecting these parameters. Here we will focus on $r$-adaptive methods for simplicial meshes, for which the topology of the mesh and the reference elements are fixed, and only the position of the mesh points changes.

\medskip

For linear problems, that is, variational problems with linear Euler-Lagrange equations, the discrepancy between the exact solution $u$ and the discrete approximation $u_h$ can often be measured in terms of an "energy norm" $\|u_h - u\|_E$. Minimizing this quantity provides a practical criterion for optimizing the grid. However, such an approach cannot be generalized in a straightforward manner to nonlinear problems. 

\medskip

In the present article, we will directly consider the minimality of the energy functional $E$ itself as the  criterion for optimizing the grid. In this framework, the parameters defining the subspace $X_h$ (i.e., the positions of the mesh points) are treated as additional variables in the minimization problem. This perspective, which introduces an intrinsic coupling between the solution and the discretization, can be traced back to early works such as \cite{felippa1976optimization,mcneice1973optimization}. More recently, the concept has been further developed in the context of \emph{configurational equilibrium}, a framework for mesh optimization that has gained attention in both engineering and mathematical communities (see, e.g., \cite{thoutireddy2004variational,mueller2002material}).  

\medskip

A natural candidate for the subspaces $X_h$ is the class of continuous piecewise affine functions defined on simplicial meshes $\mathcal{T}_h$ of a given fixed polygonal domain $\Omega$. Here, each function in $X_h$ is affine on each simplex in $\mathcal{T}_h$. The minimization process will then include minimization over the position of the mesh points of $\mathcal T_h$, the topology of the mesh being fixed. 

\medskip

The primary objective of this article is to investigate whether the optimal meshes obtained through this approach converge, in a suitable sense, to an asymptotically optimal mesh as the number of simplices tends to infinity. In other words, on the level of equations, we are interested in passing to the limit of the  configurational equilibrium equations. Focusing exclusively on the one-dimensional case, and assuming suitable regularity and coercivity properties, we will demonstrate that this convergence does even occur on a functional level in the sense of $\Gamma$-convergence, which implies in particular the convergence of solutions of configurational equilibrium. We thus obtain a rigorous and theoretically satisfactory analysis of the problem.

%\medskip

% The deformed meshes will be expressed as piecewise affine maps $\bar y_n:[a,b]\to[a,b]$, where $\bar y_n$ is affine on each cell in a uniform reference mesh on $[a,b]$ with $n$ cells. Our main result implies that the deformed meshes  $\bar y_n$ corresponding to minimizers converge to some limit $\bar y$ which in turn is the minimizer of a certain limit functional. Our result will be stated in the framework of $\Gamma$-convergence.

% , see Section \ref{sec:gamma-conv-discr} below. 

\medskip

From our analysis and the result for the limit functional, it is relatively straightforward to conjecture a similar behavior for piecewise affine approximations of minimizers of variational problems in dimensions larger than one, $d>1$. If one wishes to establish suitable compactness results that are crucial for a rigorous analysis in the sense of $\Gamma$-convergence, this is, however, much more challenging than in the present case $d=1$, and will be addressed elsewhere.

\medskip

Our results share some characteristics with the analysis of adaptive meshes from \cite{courty2006continuous,loseille2011continuous}, where a variational problem for the mesh that best represents a twice continuously differentiable function is considered.

\subsection{Outline of the article} The article is structured as follows: In Section \ref{sec:sett-stat-main}, we state and prove  the main theorem, Theorem \ref{thm:Fjgamma}. In Section \ref{sec:tools-notat-auxil}, we present some technical  preliminaries. The proof of lower and upper bound that constitute $\Gamma$-convergence will be given in Sections \ref{sec:proof-comp-lower} and \ref{sec:proof-upper-bound} respectively.  In Section \ref{sec:examples}, we provide numerical illustrations of our results. The proof of a technical lemma used in the lower bound is proved in the appendix. 

\subsection{Notation}
The Lebesgue measure on $\R$ is denoted by $\L^1$; integration of an integrable function $f:\R\to \R$ with respect to it is denoted by $\int f\d \L^1$, or by $\int f(x)\d x$. Let $I=[a,b]\subset \R$.  The weak convergence of a sequence $(u_j)_{j\in\N}$ to $u$ in $L^p(I)$  is denoted by $u_j\wto u$. We will use the following, slightly non-standard, definition of $BV$ functions on a closed interval with fixed boundary values: Supposing $\alpha,\beta\in\R$, we denote by $u\in BV^{\alpha,\beta}(I)$ the restrictions to $I$ of functions $\tilde u\in BV_{\mathrm{loc.}}(\R)$ with $\tilde u(x)=\alpha$ for $x<a$, $\tilde u(x)=\beta$ for $x>b$. In particular, for $u\in BV^{\alpha,\beta}(I)$ whose extension to $\R$ as above is denoted by $\tilde u$,  there exists  a signed measure $[Du]\in \mathcal M(\R)$ with support in $I$ such that
\[
\int_\R \varphi(x)\cdot\d[D u](x)=\int_\R  \tilde u(x)\varphi'(x) \d x \quad \forall \varphi\in C^1_c(\R)\,.
\]
Whenever we deal with $BV$ functions in the present paper, these will be functions in  $BV^{a,b}(I)$; which is why we drop the indices and simply write $BV(I)$ from now on. 
For a sequence $(u_j)_{j\in\N}\subset BV(I)$ and  $u\in BV(I)$ such that $u_j\to u$ in $L^1(I)$ and 
\[
\lim_{j\to\infty}\int_I \varphi(x)\cdot \d[Du_j](x)=\int_I \varphi(x)\cdot \d[Du](x)\quad \forall \varphi\in C^0(I)\,,
\]
we will write $u_j\wsto u$ in $BV(I)$. We will use the notation $u'\equiv[Du]$ for $u\in BV(I)$. If $u\in BV(I)$, then by Lebesgue's decomposition theorem, we may write
\[
u'=\frac{\d u'}{\d\L^1}\L^1+ (u')_s\,,
\]
where $(u')_s$ is singular with respect to Lebesgue measure.
 The characteristic function of a set $A$ is denoted by  $\mathds{1}_A$. The scalar product between matrices $A,B\in\R^{k\times k}$ is defined by  $A:B=\Tr(A^TB)$, where $\Tr$ denotes the the trace, and the upper index $T$ denotes the transpose.  The symbol ``$C$'' will be used as follows: An inequality such as $f\leq C g$ has to be read as ``there exists a numerical constant $C>0$ such that $f\leq Cg$''. If the constant depends on other quantities $a,b,\dots$, then we write $C(a,b,\dots)$. We also write $f\lesssim g$ instead of $f\leq Cg$, and $f\simeq g$ for $C^{-1}g\leq f\leq C g$.

\section{Setting and statement of the main theorem}

\label{sec:sett-stat-main}
Let $I=[a,b]\subset\R$, $\theta>0$, $N\in\N$, $1<q,r<\infty$. Consider a  Lagrangian
\[
  \begin{split}
  \mathcal L: I\times\R^N\times \R^N&\to\R\\
  (x,z,p)&\mapsto \mathcal L(x,z,p)
\end{split}
\]
satisfying
\[
  \begin{split}
    % \mathcal L&\in C^0(I\times\R^N\times\R^N)\\
    |\mathcal L(x,z,p)|&\leq C(1+|z|^r+|p|^q)\,.
      \end{split}
    \]
The symbols $z,p$ will always have the meaning of the respective arguments of $\mathcal L$.
We will denote by   $\nabla_{(z,p)}$  the  $2N$-dimensional gradient with respect to the variables $z,p\in\R^N$,
\[
\nabla_{(z,p)} \mathcal L(x,z,p)=(\partial_{z_1}\mathcal L,\partial_{z_2}\mathcal L,\dots,\partial_{z_N}\mathcal L,\partial_{p_1}\mathcal L,\dots,\partial_{p_N}\mathcal L)^T\,.
\]
The Hessian $\nabla_{(z,p)}^2\mathcal L$ will denote the ${2N\times 2N}$ matrix  containing the respective partial  second derivatives with respect to $z$ and $p$, while $\nabla^2_p\mathcal L$ denotes the $N\times N$ matrix containing the second partial derivatives with respect to $p$.

% \begin{definition}
%   We say that $\mathcal L:I\times\R^N\times\R^N\to I$ is almost $C^2$ if for almost every $x$, $\mathcal L(x,\cdot,\cdot)\in C^1(\R^{2N})$, $\nabla_{(z,p)}^2\mathcal L(x,z,p)$ exists for every $(z,p)\in\R^{2N}\setminus D_x$ where $D_x$ is a discrete set, and for all $x\in I$, $z^*,p^*,v\in\R^N$, $w\in\R^N\setminus\{0\}$, 
% \[
%   \begin{split}
%     \mathcal L(x,z^*+v,p^*+w)&=\mathcal L(x,z^*,p^*)+\nabla_{(z,p)}\mathcal L(x,z^*,p^*)\cdot \begin{pmatrix}v\\w\end{pmatrix}\\
%                          &\quad+\int_0^1 (1-\tau)\nabla_{(z,p)}^2\mathcal L(x, z^*+\tau v,p^*+\tau w): \begin{pmatrix}v\\w\end{pmatrix}\otimes \begin{pmatrix}v\\w\end{pmatrix}\d \tau\,.
%   \end{split}
% \]
% \end{definition}

% Heiner question why F notation below and E on the first page.
The action functional associated to $\mathcal L$ reads
\[
\mathcal F(u)=\int_a^b \mathcal L(x,u(x),u'(x))\d x\,.
\]

Let $U_a,U_b\in\R^N$, and $\mathcal A\equiv \mathcal A(U_a,U_b)$ given by
\[
  \mathcal A=\{u\in W^{1,q}(I;\R^N):u(a)=U_a,\,u(b)=U_b\}\,.
  \]

The existence and regularity of a minimizer $u_*$ of the variational problem 
  \begin{equation}\label{eq:43}
\inf_{u\in\mathcal A}\mathcal F(u)
\end{equation}
under standard assumptions such as convexity of the map $p\mapsto \mathcal L(x,z,p)$ is guaranteed by well-known theorems, 
 see e.g.~Theorems 3.7 and 4.1 in \cite{buttazzo1998one}.
We will make the % assumption that there exists a \emph{unique} minimizer of the problem,
% \begin{itemize}
% \item[(A1)] There exists a unique  $u_*\in\mathcal A$ such that $\mathcal F(u)\geq \mathcal F(u_*)$ for all $u\in \mathcal A$, and $u_*$ is an element of $C^2(I;\R^N)$.
% \end{itemize}
% A second
following technical assumption   on the stability  of the minimizer $u_*$:
 \begin{itemize}
\item[(A1)] $\mathcal L\in C^2(I\times\R^N\times\R^N)$,  and there exists $\theta>0$ such that for  every $x\in I$,  
  \begin{equation}\label{eq:5}
    \begin{split}
\int_0^1&(1-\tau)\nabla^2_{(z,p)}\mathcal L(x,u_*(x)+\tau v,u_*'(x)+\tau w)\d\tau\\
&\geq \theta\left(
  \begin{array}{cc}
    \id_{N\times N} & 0\\ 0 & 0 
  \end{array}
\right)  \quad \forall v,w\in\R^N\,\forall x\in I\,.
\end{split}
\end{equation}
% {\color{red} Problematic: If $\mathcal L$ is not $C^2$ everywhere, the integrand is not necessarily defined.}
% \item[(A2'')] There exists a measurable  function $c:I\to [0,\infty)$ that is positive almost everywhere such that 
% \[
% \int_0^1(1-\tau)\nabla^2_{(z,p)}\mathcal L(t,u_*(t)+\tau v,u_*'+\tau w)\geq c(t)\id_{2N\times 2N}  \quad \forall v,w\in\R^N\,.
% \] 
\end{itemize}

\begin{remark}
  \begin{itemize}
\item[(i)] The assumption (A1) implies in particular that the minimizer $u_*$ is unique. % {\color{red} Does it? That would require global $C^2$ regularity! }
  \item[(ii)] Property (A1) depends on the Lagrangian and the
    minimizer $u_*$.  An example of a pair that satisfies the above
    assumption is given by
\[
\mathcal L(x,z,p)= A(x)k(p)+B(x)l(z)+f(x)\cdot z\,,
\]
where $A(x)\geq  \theta_1>0$ and $B(x)> 0$ for all $x\in I$, $k,l:\R^N\to\R$ are convex and $C^2$,
$f\in C^2(I;\R^N)$,  provided that for some $\theta_2>0$ the minimizer satisfies 
  \begin{equation}\label{eq:25}
    \nabla^2_pk(u_*'(x))\geq \theta_2\id_{N\times N}\quad\forall x\in I\,.
  \end{equation}
% Indeed, % the strict convexity of $(z,p)\mapsto \mathcal L(x,z,p)$ is obvious. The uniqueness of the minimizer $u_*$ follows. Furthermore,
%   \begin{equation}\label{eq:10}
%     \begin{split}
%       \int_0^1&(1-\tau)\nabla^2_{(z,p)}\mathcal L(x,u_*(x)+\tau v,u_*'(x)+\tau w)\d\tau\\
%               &\geq\int_0^1(1-\tau) q|u_*'(x)+\tau w|^{q-2}\times\\
% &\qquad \times\left(\id_{N\times N}+(q-2)\frac{u_*'(x)+\tau w}{|u_*'(x)+\tau w|}\otimes  \frac{u_*'(x)+\tau w}{|u_*'(x)+\tau w|}\right) \d \tau\\
%               &\gtrsim   q(q-1)|u_*'(x)|^{q-2}\id_{N\times N}
%     \end{split}
%   \end{equation}
% and the right hand side satisfies \eqref{eq:5} by the assumption $u_*'(x)\neq 0$ on $I$. 
% Also, the assumption $\mathcal L\in C^2$ (on a suitable domain) can be recovered in this setting, since the set where $\mathcal L$ is not $C^2$ is not attained by the requirement$u_*'(x)\neq 0$.
\item[(iii)] An interesting and very simple example is given by the Dirichlet problem, defined by 
\[
\mathcal L(x,z,p)=\frac12 |p|^2+f(x)\cdot z\,.
\]
This is just  a special case of (i), but in this case,  the condition \eqref{eq:25} on  the unique minimizer $u_*$ is trivially fulfilled. Our numerical examples in Section \ref{sec:examples} will consider optimal meshes for this problem, with $N=1$.
\item[(iv)] It is possible to relax condition (A1) to allow for Lagrangians that are not necessarily $C^2$, while preserving the identity of Taylor's Theorem for $C^2$ functions with an integral rest (see \eqref{eq:19} below) in a suitable sense, as well as a  sufficient coercivity of the second variation  $g\mapsto \delta^2 \mathcal F(u_*,g)$ (see equation \eqref{eq:6} below) for our argument to hold with some modifications. We have refrained from admitting this slightly more general setting in order to keep the exposition short and transparent. 
\end{itemize}

\end{remark}

We will consider the approximation of the unique minimizer $u_*$ via adaptive piecewise affine finite elements. For $n\in \N$ and $i\in \{0,\dots,n-1\}$, we write
  \begin{equation}\label{eq:23}
\bar X_i^n=\left[a+\frac{i(b-a)}{n},a+\frac{(i+1)(b-a)}{n}\right)\,.
\end{equation}
These should be thought of the cells in a reference configuration.
Let $\bar x_i^n$ denote the midpoint of $\bar X_i^n$, 
\[
\bar x_i^n=a+\frac{(i+1/2)(b-a)}{n}\,.
\]
We define continuous piecewise affine functions corresponding to ${\bar X}_i^n$, 
\[
\PA^n(I)=\{u\in C^0(I;\R^N): u|_{{\bar X}_i^n} \text{ affine for } i=0,\dots,n-1\}\,.
\]
Furthermore, we define a class of   monotone continuous piecewise affine  functions $[a,b]\to[a,b]$, 
\[
  \begin{split}
\PAM^n(I):=&\{\bar y\in C^0(I): \bar y|_{{\bar X}_i^n} \text{ affine for } i=0,\dots,n-1,\\
&\quad \bar y'\geq 0 \text{ almost everywhere, } \bar y(a)=a, \bar y(b)=b\}\,.
\end{split}
\]
For $\bar y\in \PAM^n(I)$, the adapted cells are given by $X_i^n=\bar y_n(\bar X_i^n)$, $i=0,\dots,n-1$. In this way, we can capture the information of an adapted mesh in the choice of some $\bar y\in \MBV(I)$.

\medskip
 
Let us denote by $\MBV(I)$ the space of 
 monotonous functions $w\in BV(I)$ with $w(a)\geq a$, $w(b)\leq b$. For $w\in \MBV(I)$, we may consider the $BV$ function whose graph (understood as the boundary of the subgraph) we obtain by reflection of the graph of $w$ across the diagonal $\{x_1=x_2\}$ in $\R^2$, and denote it by $w^{-1}$, which is again in $\MBV(I)$. More precisely, we define $w^{-1}$ by requiring
\[
w^{-1}(x_2)< x_1 \quad \Leftrightarrow \quad x_2< w(x_1)\quad \text{ for all } (x_1,x_2)\in [a,b]^2\,.
\]
Clearly $\PAM^n(I)\subset \MBV(I)$.
% {\color{red} Need to introduce a notation for the limits of these functions (monotone BV functions) and a remark justifying a change of variables w.r.t.~these functions.}
% We will denote by $x_i^j$ the image of ${\bar x}_i^n$ under $\bar y_j$, 
% \[
% x_i^j=\bar y_j({\bar x}_i^n)\,.
% \]

\medskip

We define the functionals $\mathcal F_n:L^1(I;\R^{N+1})\to[0,\infty]$  by setting
\[
\mathcal F_n(u,y)=\begin{cases} n^2\left(\mathcal F(u)-\mathcal F(u_*)\right)  & \text{ if }y^{-1}\in \PAM^n(I),\,u\circ y^{-1}\in \PA^n(I)\cap \mathcal A\\
+\infty & \text{ else.}\end{cases}
\]
Furthermore we % define
% \[
% f(t)=\sum_{i,j=1}^N\partial_{p_i}\partial_{p_j}\mathcal L(t,u_*(t),u'(t))u_{*i}''(t)u_{*j}''(t)
% \]
% and
set
\begin{equation}\label{eq:application}
%  \begin{split}
{\mathcal F}^*(g,y)=\frac12\delta^2 \mathcal F(u_*,g)
+\frac{(b-a)^2}{24}\int_I\nabla_p^2\mathcal L(x,u_*,u_*'):u_*''\otimes u_*''\left|\frac{\d y'}{\d \L^1}\right|^{-2}\d x
%\end{split}
\end{equation}
if $y\in \MBV(I)$, $g\in W^{1,2}_0(I)$, and $\mathcal F^*=+\infty$ otherwise. The integral above has to be understood in the following way: We define $u_*''(x)\otimes u_*''(x)\left|\frac{\d y'}{\d \L^1}\right|^{-2}$ as the measurable function satisfying $u_*''(x)\otimes u_*''(x)\left|\frac{\d y'}{\d \L^1}(x)\right|^{-2}\equiv 0$ if $u_*''(x)=0=\frac{\d y'}{\d \L^1}(x)$,  and $u_*''(x)\otimes u_*''(x)\left|\frac{\d y'}{\d \L^1}\right|^{-2}\equiv +\infty$ if $u_*''(x)\neq 0=\frac{\d y'}{\d\L^1}(x)$.
The  integral    is understood  to be equal to $+\infty$ if the integrand  is not in $L^1$. Finally
 $\delta^2\mathcal F$ denotes the second variation of $\mathcal F$ given by 
  \begin{equation}\label{eq:6}
    \delta^2 \mathcal F(u_*,g)=\int_I(g(x)^T,g'(x)^T)\nabla^2_{(z,p)}\mathcal L(x,u_*(x),u_*'(x))\left(
      \begin{array}{c}
        g(x)\\ g'(x)
      \end{array}
    \right)\d x\,.
  \end{equation}
% For ease of notation, we will denote the integrand in the latter integral by
% \[
% f(t)\equiv (v(t)^T,v'(t)^T)\nabla^2_{(z,p)}\mathcal L(t,u_*(t),u_*'(t))\left(
%   \begin{array}{c}
%     v(t)\\ v'(t)
%   \end{array}
% \right)\,.
% \]
Our main result is the $\Gamma$-convergence $\mathcal F_n\to\mathcal F^*$:
\begin{theorem}\label{thm:Fjgamma}
Let $\mathcal L:I\times \R^N\times\R^N\to\R$ satisfy condition (A1).
  \begin{itemize}
  \item[(o)] Suppose $(u_n,y_n)\in L^1(I;\R^{N+1})$ such that
    $\sup_n\mathcal F_n(u_n,y_n)<\infty$. Then there exists  a subsequence
    (which we do not relabel) and $g\in W^{1,2}_0(I)$, $y\in MBV(I)$  such that
\[
n(u_n-u_*)\wto g\quad\text{ in } W^{1,2}_0(I)\,,\qquad y_n\wsto y \quad\text{ in } BV(I)\,.
\]
% Writing $I_0:=\{x:u_*''(x)\neq 0\}$, we furthermore have the convergence 
% \[
% \frac{1}{y_n'}\wto \frac{1}{y'} \text{ in } L^2_{\mathrm{loc}}(I_0) \,.
% \]
\item[(i)] Suppose $n(u_n-u_*)\wto g$ and $y_n\wsto y$ as in the previous point. Then 
  \begin{equation}\label{eq:20}
  \begin{split}
\liminf_{n\to\infty} \mathcal F_n(u_n,y_n)\geq \mathcal F^*(g,y)\,.
\end{split}
\end{equation}

\item[(ii)] Suppose $g\in W^{1,2}_0(I)$, $y\in \MBV(I)$. Then there exists a  sequence  $(u_{n},y_{n})_{n\in\N}\subset \PA^{n}(I)\times \PAM^{n}(I)$ such that $n(u_{n}-u_*)\wto g$ in $W^{1,2}(I)$, $y_{n}\wsto y$ in $BV(I)$ and
\[
  \begin{split}
\limsup_{n\to \infty} \mathcal F_{n}(u_{n},y_{n})&=\mathcal F^*(g,y)\,.
\end{split}
\]
\end{itemize}
\end{theorem}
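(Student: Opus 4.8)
The plan is to prove the three parts separately, using as the common device the second-order Taylor expansion of $\mathcal F$ about $u_*$: for $u\in\mathcal A$ and $v:=u-u_*\in W^{1,q}_0(I;\R^N)$ one has $\mathcal F(u)-\mathcal F(u_*)=\int_0^1(1-\tau)\,\delta^2\mathcal F(u_*+\tau v,\,v)\,\d\tau$, the first-order term vanishing because $u_*$ solves the Euler--Lagrange equation and $v$ is an admissible variation. Assumption (A1) will be used pointwise in the form $\mathcal F(u)-\mathcal F(u_*)\ge\theta\|u-u_*\|_{L^2(I)}^2$, and, combined with the convexity of $p\mapsto\mathcal L(x,z,p)$, in the form that $g\mapsto\delta^2\mathcal F(u_*,g)$ is sequentially weakly lower semicontinuous on $W^{1,2}_0(I)$: its top-order part $\int_I\nabla_p^2\mathcal L(x,u_*,u_*'):g'\otimes g'\,\d x$ is weakly lsc since $\nabla_p^2\mathcal L(x,u_*,u_*')\ge 0$, and the lower-order parts are weakly continuous by the compact embedding $W^{1,2}_0(I)\hookrightarrow L^2(I)$. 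In the examples of Remark (ii)--(iii), where $\nabla_p^2\mathcal L(x,u_*,u_*')\ge c_0\,\id$, one even gets $\delta^2\mathcal F(u_*,g)\gtrsim\|g\|_{W^{1,2}}^2$, which I will use for the compactness.

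For the compactness statement (o), the bound $\sup_n\mathcal F_n(u_n,y_n)<\infty$ gives $\mathcal F(u_n)-\mathcal F(u_*)\le C n^{-2}$, hence $\|n(u_n-u_*)\|_{L^2(I)}\le C$ by (A1). For the full $W^{1,2}_0$-bound I would compare $u_n$ with the piecewise-affine interpolant $\Pi_n u_*$ of $u_*$ on the adapted mesh $\{X_i^n\}$ attached to $y_n$: running the expansion of $n^2(\mathcal F(u_n)-\mathcal F(u_*))$ from the next paragraph, its leading term controls $\|n(u_n-\Pi_n u_*)\|_{W^{1,2}}^2$ (via the $W^{1,2}_0$-coercivity of $\delta^2\mathcal F(u_*,\cdot)$) up to a negligible cross term, while the classical interpolation estimates bound $\|n(\Pi_n u_*-u_*)\|_{W^{1,2}}$, so $n(u_n-u_*)=n(u_n-\Pi_n u_*)+n(\Pi_n u_*-u_*)$ is bounded in $W^{1,2}_0(I)$ and a subsequence converges weakly to some $g$. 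As for $(y_n)$: each $y_n$ is monotone with $y_n(a)=a,\ y_n(b)=b$, so its total variation equals $b-a$; the sequence is bounded in $BV(I)$ and Helly's selection theorem gives a further subsequence converging weakly-$*$ to some $y\in\MBV(I)$.

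For the lower bound (i), decompose $u_n-u_*=(u_n-\Pi_n u_*)+(\Pi_n u_*-u_*)=:\tfrac1n\varphi_n+\psi_n$. On a cell $X_i^n$ of length $h_i$ the interpolation error satisfies $\psi_n'(x)\simeq -u_*''(\xi_i)(x-\xi_i)$, $\xi_i$ the midpoint, hence $\psi_n'$ has vanishing mean on each cell, $\|\psi_n'\|_{L^2(X_i^n)}^2=\tfrac{h_i^3}{12}|u_*''(\xi_i)|^2+o(h_i^3)$, $\psi_n$ vanishes at the mesh nodes, $n\psi_n\to 0$ strongly in $L^2(I)$ and $n\psi_n\wto 0$ in $W^{1,2}(I)$; therefore also $\varphi_n\wto g$ in $W^{1,2}(I)$. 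Writing $v=u_n-u_*$ and $n^2(\mathcal F(u_n)-\mathcal F(u_*))=\int_0^1(1-\tau)\,\delta^2\mathcal F(u_*+\tau v,\,nv)\,\d\tau$, I replace $\delta^2\mathcal F(u_*+\tau v,\cdot)$ by $\delta^2\mathcal F(u_*,\cdot)$ to leading order — the error involves $\nabla^2_{(z,p)}\mathcal L$ evaluated along arguments tending to $(u_*,u_*')$ on the cells where $h_i\to 0$, which are the only ones carrying mass — and expand along $nv=\varphi_n+n\psi_n$ into three pieces. The first, $\tfrac12\,\delta^2\mathcal F(u_*,\varphi_n)$, has $\liminf\ge\tfrac12\,\delta^2\mathcal F(u_*,g)$ by weak lsc. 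The cross piece (the polarization of $\delta^2\mathcal F(u_*,\cdot)$ at $\varphi_n$ and $n\psi_n$) tends to $0$: its terms involving $n\psi_n$ because $n\psi_n\to0$ in $L^2(I)$, and the term coupling $\varphi_n'$ with $(n\psi_n)'$ because $\int_{X_i^n}(n\psi_n)'=0$, so after subtracting cell averages it is $o(1)\,\|\varphi_n\|_{W^{1,2}}\,\|n\psi_n\|_{W^{1,2}}$, with the $o(1)$ coming from the energy bound together with $\nabla_p^2\mathcal L(x,u_*,u_*')\ge c_0\id$ (which makes the cells meeting $\{u_*''\neq0\}$ shrink). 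The third piece, $\tfrac12\,\delta^2\mathcal F(u_*,n\psi_n)$, has leading part $\tfrac12\int_I\nabla_p^2\mathcal L(x,u_*,u_*'):(n\psi_n')\otimes(n\psi_n')\,\d x$, which via $h_i=|X_i^n|=\tfrac{b-a}{n}|\tfrac{\d y_n'}{\d\L^1}|^{-1}$ on $X_i^n$ and a Riemann-sum argument equals $\tfrac{(b-a)^2}{24}\int_I\nabla_p^2\mathcal L(x,u_*,u_*'):u_*''\otimes u_*''\,|\tfrac{\d y_n'}{\d\L^1}|^{-2}\,\d x+o(1)$, so its $\liminf$ is $\ge\tfrac{(b-a)^2}{24}\int_I\nabla_p^2\mathcal L(x,u_*,u_*'):u_*''\otimes u_*''\,|\tfrac{\d y'}{\d\L^1}|^{-2}\,\d x$ by the lower-semicontinuity lemma proved in the appendix — valid because the convex integrand $t\mapsto t^{-2}$ has vanishing recession function, so the singular part of the limiting measure drops out (after approximating the continuous weight uniformly). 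Adding the three pieces gives \eqref{eq:20}.

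For the recovery sequences (ii), assuming first $\mathcal F^*(g,y)<\infty$, I would pick $\bar y_n\in\PAM^n$ with $\bar y_n\wsto y^{-1}$ (possible since $\bigcup_n\PAM^n$ is weakly-$*$ dense in $\MBV(I)$), set $y_n:=\bar y_n^{-1}\wsto y$, take the adapted mesh $X_i^n=\bar y_n(\bar X_i^n)$, and let $u_n$ be the piecewise-affine interpolant of $u_*+\tfrac1n g$ on $\{X_i^n\}$ (replacing $g$ by a smooth approximation and diagonalizing). Then $n(u_n-u_*)=n(\Pi_n u_*-u_*)+\Pi_n g\wto g$ in $W^{1,2}(I)$ (using $g\in W^{1,2}(I)\hookrightarrow C^0(I)$ and $n(\Pi_n u_*-u_*)\wto 0$), and the same two-scale expansion gives $\limsup n^2(\mathcal F(u_n)-\mathcal F(u_*))\le\tfrac12\,\delta^2\mathcal F(u_*,g)+\tfrac{(b-a)^2}{24}\int_I\nabla_p^2\mathcal L(x,u_*,u_*'):u_*''\otimes u_*''\,|\tfrac{\d y'}{\d\L^1}|^{-2}\,\d x$; equality in the Riemann-sum step is arranged by first treating $y$ with $\tfrac{\d y'}{\d\L^1}$ bounded above and bounded away from $0$ and then passing to general $y\in\MBV(I)$ by density, and combined with (i) this yields $\lim\mathcal F_n(u_n,y_n)=\mathcal F^*(g,y)$; when $\mathcal F^*(g,y)=+\infty$ the inequality $\limsup\le+\infty$ is automatic and one only needs a sequence realizing the two convergences. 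I expect the principal difficulty to be precisely the scale-decoupling in the lower bound: extracting the interface term from the Taylor remainder although $u_n'-u_*'$ does not converge to zero but only oscillates with zero local mean, proving the cross term is uniformly negligible, and carrying this out while $y\mapsto\int_I F\,|\tfrac{\d y'}{\d\L^1}|^{-2}\,\d x$ is merely lower semicontinuous — so that the mesh cells failing to shrink (where $\tfrac{\d y'}{\d\L^1}=0$) must be localized carefully away from $\{u_*''\neq0\}$, which is exactly the content of the appendix lemma.
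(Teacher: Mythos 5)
Your proposal follows essentially the same route as the paper. The decomposition $u_n-u_*=\tfrac1n\varphi_n+\psi_n$ with $\psi_n=\Pi_nu_*-u_*$ is, after differentiating, literally the paper's splitting $g_n'=A^n+\ell^n(B^n+R^n)$: the slope of $\Pi_nu_*$ on a cell is the cell average of $u_*'$, so $\varphi_n'$ is the piecewise-constant cell average $A^n$ of $g_n'$ and $n\psi_n'$ is the zero-mean sawtooth $-\ell^n(B^n+R^n)$. From there your treatment of the three pieces (weak lsc for the $A^n$ part, vanishing of the cross term by zero cell means, explicit integration $\int_{X_i^n}|\ell^n|^2=\tfrac{(b-a)^2}{12}\int_{X_i^n}|y_n'|^{-2}$ for the sawtooth part, lower semicontinuity of $y\mapsto\int F\,|\tfrac{\d y'}{\d\L^1}|^{-2}$ via convexity of $t\mapsto t^{-2}$ and the vanishing recession function, localization to $\{|u_*''|>\beta\}$ to force the relevant cells to shrink, and the regularize-then-diagonalize construction for the recovery sequence) matches the paper's Sections 4, 5 and the appendix lemma point for point.

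The one place where your argument as written would not survive is the compactness step. The claim that ``classical interpolation estimates bound $\|n(\Pi_nu_*-u_*)\|_{W^{1,2}}$'' is false for a general adapted mesh: $\|n(\Pi_nu_*-u_*)'\|^2_{L^2}\simeq n^2\sum_i\L^1(X_i^n)^3|u_*''(\xi_i)|^2$, which blows up like $n^2$ if a cell of order-one length meets $\{u_*''\neq0\}$; the only thing that rules this out is the energy bound itself (this is exactly Lemma \ref{lem:length_estimate} of the paper), so invoking interpolation theory here is circular unless you make that dependence explicit. The detour is also unnecessary: assumption (A1), read as coercivity of the integrated Hessian in the $p$-block (which is how the paper uses it, and the only reading consistent with the Dirichlet example), gives $n^2(\mathcal F(u_n)-\mathcal F(u_*))\geq\theta\|g_n'\|^2_{L^2}$ in one line, and Poincar\'e then yields the $W^{1,2}_0$ bound without splitting off the interpolant; if you do insist on the splitting, note that $\varphi_n'$ and $n\psi_n'$ are $L^2$-orthogonal (cell averages versus zero-mean remainders), so both are controlled by $\|g_n'\|_{L^2}$ with no cross term to absorb. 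Relatedly, the coercivity $\nabla^2_p\mathcal L(x,u_*,u_*')\geq c\,\id$ that you only grant yourself ``in the examples'' is in fact a consequence of (A1) at $v=w=0$, so you need not restrict to those cases.
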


\begin{remark}
  \begin{itemize}
  \item[(i)] Our result implies that for any sequence of minimizers $(u_n,y_n)$ of $\mathcal F_n$ (which, in other words, is just  a minimizer of $\mathcal F$ within the class of continuous functions that are piecewise affine on $n$ pieces whose position is not fixed), there exists a subsequence (which we do not relabel) and $g\in W^{1,2}_0(I)$, $y\in \MBV(I)$, such that $n(u_n-u_*)\wto g$ in $W^{1,2}$, $y_n\wsto y$ in $BV$, and $(g,y)$ is a minimizer of $\mathcal F^*$. For a proof of this fundamental fact in the theory of $\Gamma$-convergence, see \cite{braides2002gamma,dal2012introduction}.
\item[(ii)] In a sense, we rediscover an energy norm to be minimized in the limit $n\to \infty$, suitable  for nonlinear problems. 
  \end{itemize}
\end{remark}

\section{Tools, notation and preparatory lemmata}

\label{sec:tools-notat-auxil}

% The theory of Young measures can be used to prove the strong convergence of minimizing sequences for strictly convex integral functionals, see the upcoming Lemma \ref{lem:weaktostrong}. The statement and proof of this lemma are a variant of Theorem 3.16 in \cite{MR1452107}, and  can be found in this form in  \cite{buffa2009compact}.

% \begin{lemma}
% \label{lem:weaktostrong}
%   Let $f:\Omega\times \R^m\times\R^N\to \R$ be a Carath\'eodory function satisfying 
% \[
% |f(x,u,v)|\leq C\left(1+|u|^{r}+|v|^{q}\right) 
% \]
% such that $f(x,u,\cdot)$ is strictly convex for almost all $x\in\Omega$ and all $u\in\R^m$. If $u_j\to u$ strongly in $L^{r}(\Omega)$ and $v_j\wto v$ weakly in $L^{q}(\Omega)$, and 
% \[
% \lim_{j\to\infty}\int_\Omega f(x,u_j,v_j)\d x=\int_\Omega f(x,u,v)\d x\,,
% \]
% then $v_j\to v$ stronlgly in $L^{q}(\Omega)$. 

% \end{lemma}

% \subsection{Auxiliary lemmata}

% \begin{lemma}
%   Let $I\subset\R$ be an interval, $f:I\to[0,\infty)$ measurable, $F:\R\to\R$ convex such that   $F\circ f\in L^1(I)$. For $n\in\N$, let $\{X_i^n:i=0,\dots,n-1\}$ be a partition of $I$ such that $\lim_{n\to\infty}\max_{i}\L^1(X_i^n)=0$. Then 
% \[
% \sum_{i=0}^{n-1} \chi_{X_i^n}\fint_{X_i^n} F(f(t))\d t \to F\circ f \quad\text{ in } L^1(I)\,.
% \]
% \end{lemma}

\subsection{Auxiliary notation and lemmata}
\label{sec:auxiliary-notation}
In our proof, barred symbols will always be associated with the ``reference configuration'' that corresponds to a regularly spaced grid: We have already defined $\bar X_i^n=[a+i\frac{b-a}{n},a+(i+1)\frac{b-a}{n})$. The inverse of a function $\bar y_n\in\PAM^n(I)$ will be denoted by $y_n$, and it is   increasing and affine on each $X_i^n$, $i=0,\dots,n-1$, where $X_i^n=\bar y_n(\bar X_i^n)$. The midpoint of $X_i^n$ is given by $x_i^n=\bar y_n(\bar x_i^n)$.

\medskip

In the upcoming proofs of upper and lower bound, we will study the functions
\[
g_n=n(u_n-u_*)\,,
\]
where $u_n$ is continuous and piecewise affine on each $X_i^n$, $i=0,\dots,n-1$ (i.e., it is of the form $u\circ y^{-1}$ for some $u\in \PA^n(I)$ and $y\in \PAM^n(I)$).

\medskip

Let $u_*$ be the minimizer of the variational problem \eqref{eq:43}.
Let $\mathcal B:=\{\beta>0:\L^1(\{x:|u_*(x)''|=\beta\})=0\}$. Clearly $\mathcal B$ contains all positive reals except a set of measure 0.
For   $\beta\in\mathcal B$ we  set
\[
  \begin{split}
I_\beta&:=\{x\in I:|u_*''(x)|> \beta\}\\
\mathcal I_\beta^n&:=\{i\in \{0,\dots,n-1\}: \max_{{X_i^n}} |u_*''|\geq\beta\}\,.
\end{split}
\]

\begin{lemma}
\label{lem:length_estimate}
 Let  $\beta\in\mathcal B$, and  $(u_n,y_n)$   such that $\bar y_n\in \PAM^n(I)$, $u_n\circ \bar y_n \in \PA^n(I)$, and $g_n:=n(u_n-u_*)$.  Then there exists a monotone increasing $\omega\in C^0([0,\infty))$ with $\omega(0)=0$ and $\omega(t)>0$ for $t>0$ such that
  \begin{equation*}\label{eq:9}
        \int_{I} |g_n'(x)|^2\d x\gtrsim \sum_{i\in\mathcal I_\beta^n}\omega(\beta)n^2\L^1(X_i^n)^3
=\omega(\beta)|b-a|^2\int_{ I_\beta^n} | y_n'(x)|^{-2}\d x\,.
  \end{equation*}
\end{lemma}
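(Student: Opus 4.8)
The plan is to exploit the elementary fact that a continuous piecewise affine interpolant cannot approximate a function with nonzero second derivative on a cell $X_i^n$ too well: on such a cell the difference $g_n = n(u_n - u_*)$ must have $L^2$-mass on the order of $n^2 \L^1(X_i^n)^5$ times the square of the averaged curvature. First I would reduce to a single-cell estimate. Fix $i \in \mathcal I_\beta^n$, so that $\max_{X_i^n} |u_*''| \geq \beta$. On $X_i^n$ the function $u_n$ is affine, hence $g_n = n(u_n - u_*)$ has $g_n'' = -n u_*''$ as a measure (up to endpoints), and more to the point $g_n$ equals $n$ times (affine minus $u_*$). The key sublemma is: for any $h \in W^{2,\infty}$ on an interval $J$ of length $\ell$ and any affine $\ell_J$,
\[
\int_J |(\ell_J - h)'|^2 \,\d x \gtrsim \ell^3 \Big( \operatorname{osc}_J h'' \Big)^2 \wedge \text{(something involving } \max_J|h''|\text{)}\,,
\]
or more robustly, a lower bound in terms of a modulus of continuity: if $|h''| \geq \beta$ somewhere on $J$, then $\int_J |(\ell_J - h)'|^2 \geq \omega(\beta)\ell^3$ for a suitable $\omega$. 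The cleanest route: since $u_n$ is affine on $X_i^n$, $(g_n/n)' = u_n' - u_*'$ is $-$ (antiderivative of $u_*''$) plus a constant, and its $L^2$ norm on $X_i^n$ is bounded below by $\ell^{3/2}$ times the distance (in the appropriate normalized $L^2$ sense) of $u_*'$ to the constants, which is controlled from below once $|u_*''|$ is large on a non-negligible portion of $X_i^n$. The modulus $\omega$ enters precisely because we only know $\max_{X_i^n}|u_*''| \geq \beta$ at one point, not on a definite fraction of the cell; the uniform continuity of $u_*''$ (from (A1), $\mathcal L \in C^2$, and regularity of $u_*$) lets us say $|u_*''| \geq \beta/2$ on a sub-interval of definite relative length, which is where $\omega$ is built.

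Second, I would sum the single-cell estimates over $i \in \mathcal I_\beta^n$. Since the $X_i^n$ are disjoint and $g_n'$ is a genuine $L^2$ function, $\int_I |g_n'|^2 \geq \sum_{i \in \mathcal I_\beta^n} \int_{X_i^n} |g_n'|^2 \gtrsim \sum_{i \in \mathcal I_\beta^n} \omega(\beta) n^2 \L^1(X_i^n)^3$, recalling $g_n = n(u_n - u_*)$ so the factor $n^2$ appears. This gives the first asserted inequality.

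Third, I would convert the sum into the integral $\omega(\beta)|b-a|^2 \int_{I_\beta^n} |y_n'|^{-2}\,\d x$. Here $I_\beta^n$ must be $\bigcup_{i \in \mathcal I_\beta^n} X_i^n$ (the notation is slightly implicit in the excerpt). On each $X_i^n$, $y_n$ is affine with $y_n(X_i^n) = \bar X_i^n$, an interval of length $(b-a)/n$, so $y_n' = \frac{(b-a)/n}{\L^1(X_i^n)}$ is constant there, whence $|y_n'|^{-2} = \frac{n^2 \L^1(X_i^n)^2}{(b-a)^2}$ and $\int_{X_i^n} |y_n'|^{-2}\,\d x = \frac{n^2 \L^1(X_i^n)^3}{(b-a)^2}$. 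Multiplying by $|b-a|^2$ and summing recovers $\sum_{i \in \mathcal I_\beta^n} n^2 \L^1(X_i^n)^3$, giving the claimed identity.

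The main obstacle is the first step: establishing the single-cell lower bound with an \emph{honest} modulus of continuity $\omega$ that depends only on $\beta$ (and on fixed data: $I$, $\mathcal L$, $u_*$), not on $n$, $i$, or the particular cell geometry. The subtlety is that a cell meeting $\{|u_*''| \geq \beta\}$ in only one point contributes essentially nothing, so one must use the continuity of $u_*''$ to upgrade "$\geq \beta$ at a point" to "$\geq \beta/2$ on a sub-interval whose relative length is controlled by $\omega_{u_*''}$, the modulus of continuity of $u_*''$" — but that sub-interval length is only controlled relative to the cell length $\ell$ when $\ell$ is small, and for large $\ell$ (finitely many such cells, since $\sum \ell = b-a$) one needs a separate, cruder bound. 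Packaging both regimes into one monotone $\omega$ with $\omega(0)=0$, $\omega(t)>0$ for $t>0$, while keeping all constants $n$-independent, is the delicate bookkeeping; I expect this is exactly why the paper defers it to the appendix. Everything after that is the routine disjointness-plus-change-of-variables computation sketched above.
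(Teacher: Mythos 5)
Your proposal is correct and follows essentially the same route as the paper: uniform continuity of $u_*''$ upgrades ``$|u_*''|\geq\beta$ at a point of $X_i^n$'' to a quantitative curvature bound on a subinterval $J=X_i^n\cap[x_0-\eta(\beta),x_0+\eta(\beta)]$, giving $\inf_v\int_J|v-u_*'|^2\gtrsim\beta^2\min(\L^1(X_i^n),\eta)^3\gtrsim\eta^3\beta^2\L^1(X_i^n)^3$ (the ``two regimes'' you worry about are absorbed by this $\min$ together with the trivial bound $\L^1(X_i^n)\leq b-a$, so $\omega(\beta)=\eta(\beta)^3\beta^2$ works without extra bookkeeping), followed by summation over cells and the same change-of-variables identity $\int_{X_i^n}|y_n'|^{-2}\d x=n^2\L^1(X_i^n)^3/(b-a)^2$. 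One small correction: the paper proves this lemma directly in Section~\ref{sec:tools-notat-auxil}, not in the appendix (the appendix contains a different lemma on lower semicontinuity of convex functionals of measures), and your reading of $I_\beta^n$ as $\bigcup_{i\in\mathcal I_\beta^n}X_i^n$ is indeed the intended one.
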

\begin{proof}
By the uniform continuity of $u_*''$, there exists $ \eta\equiv\eta(\beta)>0$ such that $|u_*''(x)-u_*''(x')|<\beta/2$ for $|x-x'|< \eta$. For $i\in \mathcal I_\beta^n$, choose $x_0$ such that  $|u_*''(x_0)|\geq \beta$.  Letting $J$ denote the intersection of $X_i^n$ with $[x_0-\eta,x_0+ \eta]$, we have that
\[
\inf_{v\in\R^n}\int_{J}|v-u_*'(x)|^2\d x\gtrsim \beta^2\min(\L^1(X_i^n),\eta)^3\,.
\]
Hence 
  \begin{equation}\label{eq:17}
\int_{J} |g_n'(x)|^2\d x=n^2\int_{J} |u_n'(x_i^n)-u_*'(x)|^2\d x\gtrsim n^2\beta^2 \min(\L^1(X_i^n),\eta)^3\gtrsim \eta^3\beta^2n^2\L^1(X_i^n)^3\,,
\end{equation}
which implies  the  first inequality in the statement of the present lemma by setting $\omega=\eta(\beta)^3\beta^2$ and summing over all $i\in \mathcal I_\beta^n$. The second relation in the statement  follows from $I_\beta\subset \cup_{i\in \mathcal I_\beta^n} X_i^n$ and the fact
  \begin{equation*}\label{eq:18}
    \begin{split}
 \L^1(X_i^n)^3=\left(\frac{b-a}{n}|\bar y_n'(\bar x_i^n)|\right)^3&
= \int_{\bar X_i^n} \frac{(b-a)^2}{n^2}|\bar y_n'(\bar x)|^3\d \bar x\\
&=\frac{(b-a)^2}{n^2}\int_{X_i^n} \frac{1}{|y_n'(x)|^{2}}\d x\,.
\end{split}
\end{equation*}
\end{proof}

\medskip

By Taylor's theorem, there exists a continuous $\tilde R_i^n(z)$ with $\lim_{z\to 0} \tilde R_i^n(z)=0$ uniformly in $i,n$ such that for $x\in X_i^n$, 
  \begin{equation}\label{eq:27}
g_n'(x)= \underbrace{\fint_{X_i^n}g_n'(t)\d t}_{=:A_i^n}+n(x-x_i^n)\left(\underbrace{\fint_{X_i^n}  u_*''(t)\d t}_{=:B_i^n}+\tilde R_i^n(x-x_i^n)\right)\,.
\end{equation}
For $x\in I$, we write 
\[
R^n(x)=\sum_{i=0}^{n-1}\chi_{X_i^n}(x)\tilde R(x-x_i^n)\,.
\]
Furthermore we introduce the piecewise constant functions 
\[
A^{n}(x)=\sum_{i=0}^{n-1}\chi_{X_i^n}(x)A_i^n\,,\qquad
B^{n}(x)=\sum_{i=0}^{n-1}\chi_{X_i^n}(x)B_i^n
\]
and the piecewise affine functions
\[
\ell^n(x)=\sum_{i=0}^{n-1}n\chi_{X_i^n}(x)(x-x_i^n)\,.
\]

With this notation in place, we may  decompose $g_n'$ as 
\[
g_n'=A^n+\ell^n(B^n+R^n)\,,
\]
which yields the following decomposition of
$g_n'\otimes g_n'$:
  \begin{equation}\label{eq:30}
  \begin{split}
    g_n'\otimes g_n'&=A^{n}\otimes A^{n}
    +\ell^n\left(A^{n}\otimes (B^{n}+R^{n})+(B^{n}+R^{n})\otimes A^{n}\right)\\
                          &\quad +|\ell^n|^2 (B^{n}+R^{n})\otimes (B^{n}+R^{n})\,.
  \end{split}
\end{equation}

\begin{proposition}
\label{lem:length_to_bounds}
Let $\beta>0$, $I_\beta=\{x\in I:|u_*''(x)|>\beta\}$ 
and suppose that $g_n\wto g$ in $W^{1,2}_0(I)$. Then the sequence $(\ell^n)_{n\in\N}$ is bounded in $L^2(I_\beta)$, and 
%\begin{itemize}
% \item[(i)]
  we have the convergences \begin{equation}\label{eq:7}
    \begin{split}
      B^{n}&\to u_*''\,, \quad R^{n}\to 0 \quad \text{ in }L^\infty(I_\beta;\R^N)\,, \\
A^{n}&\wto g'\quad\text{ in }L^2(I_\beta)\,.
    \end{split}
  \end{equation}
% \item[(ii)] The sequence $\frac{1}{y_n'}$ is bounded in $L^2(I_\beta)$. 
% \item[(iii)] We have $\ell^n\wto 0$ in $L^2(I_\beta)$. 
% \item[(iv)] For any sequence $(n_k)_{k\in\N}$ and $f\in L^2(I_\beta)$ with $\lim_{k\to\infty}|y_{n_k}'|^{-2}= f$ in the biting sense, we have that $\lim_{k\to\infty}|\ell^{n_k}|^2= \frac{b-a}{12} f$ in the biting sense. 
% \end{itemize}
%       \limsup_{n\to\infty}\|\ell^n\|_{L^2(J)}&\lesssim \limsup_{n\to\infty} \|g'_n\|_{L^2(I)}\,,\\
% (|\ell^n|^2)_{n\in\N}&\text{ is  equiintegrable. }
% \end{itemize}
\end{proposition}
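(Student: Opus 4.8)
The plan is to prove Proposition \ref{lem:length_to_bounds} by exploiting the decomposition \eqref{eq:27} together with the length estimate from Lemma \ref{lem:length_estimate}. First I would establish the boundedness of $(\ell^n)_{n\in\N}$ in $L^2(I_\beta)$. By definition, $\ell^n$ is piecewise affine, vanishing at the midpoint $x_i^n$ of each cell $X_i^n$ and with slope $n$, so $|\ell^n(x)|\le \tfrac{n}{2}\L^1(X_i^n)$ on $X_i^n$; hence $\int_{X_i^n}|\ell^n|^2\d x\lesssim n^2\L^1(X_i^n)^3$. Summing over $i\in\mathcal I_\beta^n$ and invoking the first inequality in Lemma \ref{lem:length_estimate} together with the assumed bound $\sup_n\|g_n'\|_{L^2(I)}<\infty$ (which follows from $g_n\wto g$ in $W^{1,2}_0(I)$, hence the sequence is bounded), we obtain $\int_{I_\beta}|\ell^n|^2\d x \lesssim \omega(\beta)^{-1}\sup_n\|g_n'\|_{L^2}^2 < \infty$, using that $I_\beta$ is covered by the cells $X_i^n$, $i\in\mathcal I_\beta^n$, up to a set contributing only to cells where $\ell^n$ is in any case controlled. (One small point needing care: on $I_\beta$ a given $x$ might lie in a cell $X_i^n$ with $i\notin\mathcal I_\beta^n$ only if $\max_{X_i^n}|u_*''|<\beta$ while $|u_*''(x)|>\beta$, which is impossible; so $I_\beta\subset\bigcup_{i\in\mathcal I_\beta^n}X_i^n$ exactly, as already noted in the proof of the lemma.)

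Next I would treat the convergences $B^n\to u_*''$ and $R^n\to 0$ in $L^\infty(I_\beta;\R^N)$. Since $u_*\in C^2(I)$ implies $u_*''$ is uniformly continuous, the cell averages $B_i^n=\fint_{X_i^n}u_*''$ satisfy $|B_i^n-u_*''(x)|\le \sup\{|u_*''(s)-u_*''(t)|:|s-t|\le \L^1(X_i^n)\}$ for $x\in X_i^n$. The key observation is that on $I_\beta$ the relevant cells satisfy $\L^1(X_i^n)\to 0$ uniformly: indeed from Lemma \ref{lem:length_estimate}, $\omega(\beta)n^2\sum_{i\in\mathcal I_\beta^n}\L^1(X_i^n)^3\lesssim \|g_n'\|_{L^2}^2$ is bounded, so in particular $n^2\L^1(X_i^n)^3\lesssim \omega(\beta)^{-1}\|g_n'\|_{L^2}^2$ for each such $i$, giving $\L^1(X_i^n)\lesssim (\omega(\beta)n^2)^{-1/3}\to 0$ uniformly over $i\in\mathcal I_\beta^n$. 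Feeding this into the modulus-of-continuity bound yields $\|B^n-u_*''\|_{L^\infty(I_\beta)}\to 0$. The statement $R^n\to 0$ in $L^\infty(I_\beta)$ follows similarly: the remainder $\tilde R_i^n(x-x_i^n)$ in \eqref{eq:27} comes from Taylor expansion of $u_*'$ and, by the stated uniform-in-$(i,n)$ decay $\lim_{z\to0}\tilde R_i^n(z)=0$, is bounded by $\omega_*(\L^1(X_i^n))$ for some modulus $\omega_*$ depending only on $u_*$; since $\L^1(X_i^n)\to 0$ uniformly on $I_\beta$, this tends to $0$ uniformly.

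Finally, the weak convergence $A^n\wto g'$ in $L^2(I_\beta)$ is where the main work lies, and I expect it to be the principal obstacle. We already know $\|A^n\|_{L^2(I)}$ is bounded: from $g_n'=A^n+\ell^n(B^n+R^n)$ we get $\|A^n\|_{L^2(I_\beta)}\le \|g_n'\|_{L^2(I_\beta)}+\|\ell^n\|_{L^2(I_\beta)}\|B^n+R^n\|_{L^\infty(I_\beta)}$, bounded by the previous steps. So $(A^n)$ has weak limit points in $L^2(I_\beta)$; it remains to identify any such limit with $g'$. For this I would test against $\varphi\in C_c^\infty(\mathrm{int}\,I_\beta)$ (or more carefully against $L^2$ functions supported in $I_\beta$) and use that $A^n$ is the piecewise-constant function of cell averages of $g_n'$: more precisely, letting $P_n$ denote the $L^2$-orthogonal projection onto functions constant on each $X_i^n$, we have $A^n=P_n(g_n')\chi_{\bigcup_i X_i^n}$, or rather $A_i^n=\fint_{X_i^n}g_n'$ exactly. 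Thus for $\varphi$ piecewise constant on some fixed grid one can compare $\int A^n\varphi$ with $\int g_n'\varphi$ via $\int_{X_i^n}(A^n-g_n')\varphi = 0$ when $\varphi$ is constant on $X_i^n$; the subtlety is that the cells $X_i^n$ are $n$-dependent and not nested, so one must approximate a general test function $\varphi$ by functions constant on the cells, controlling the error by $\|\varphi-P_n\varphi\|_{L^2}\cdot\|g_n'-A^n\|_{L^2}$, and the first factor goes to $0$ because mesh sizes on $I_\beta$ shrink uniformly (again by Lemma \ref{lem:length_estimate}) while $C^0$ functions are approximated by their cell averages. Combining $\int g_n'\varphi\to\int g'\varphi$ (from $g_n\wto g$ in $W^{1,2}_0$, restricted to $I_\beta$) with these estimates identifies the weak limit of $A^n$ as $g'$ on $I_\beta$, and since the limit is unique the whole sequence converges. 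The delicate point throughout is that $I_\beta$ is open but not necessarily an interval, and that the cells near $\partial I_\beta$ need separate, though harmless, bookkeeping; I would handle this by working on compactly contained open subsets of $I_\beta$ and using the boundedness of all quantities to pass to the full set.
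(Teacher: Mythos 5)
Your proof is correct and follows essentially the same route as the paper: the $L^2(I_\beta)$ bound on $\ell^n$ via $\int_{X_i^n}|\ell^n|^2\ud x\simeq n^2\L^1(X_i^n)^3$ combined with Lemma \ref{lem:length_estimate}, the uniform shrinking of cells meeting $I_\beta$ (again from \eqref{eq:17}) plus uniform continuity of $u_*''$ for the $L^\infty$ convergences, and the identification of the weak limit of the cell averages $A^n$ with $g'$. You merely spell out details (the inclusion $I_\beta\subset\bigcup_{i\in\mathcal I_\beta^n}X_i^n$, the duality argument with $P_n\varphi\to\varphi$) that the paper's terse proof leaves implicit.
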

\begin{proof}
  The first  line of \eqref{eq:7} follows from $\L(X_i^n)\to 0$ for $n\to\infty$, for every $i\in \{0,\dots,n-1\}$ such that $X^n_i\cap I_\beta\neq \emptyset$ (see \eqref{eq:17}), and the uniform continuity of $u_*''$. The second line follows from $\L(X_i^n)\to 0$ and $g_n'\wto g'$ in $L^2$. To prove the  boundedness of $(\ell^n)_{n\in\N}$, we calculate
\[
\int_{X_i^n}(x-x_i^n)^2\d x=\frac{1}{12} \L^1(X_i^n)^3
\]
which implies 
\[
  \begin{split}
\limsup_{n\to\infty}\int_{I_\beta} |\ell^n|^2\d x&\lesssim \limsup_{n\to\infty}\sum_{i\in\mathcal I^n_\beta}n^2\int  (x-x_i^n)^2\d x\\
&\leq \limsup_{n\to\infty}\beta^{-2}\|g_n\|_{L^2(I)}^2\,,
\end{split}
\]
 where we have used Lemma \ref{lem:length_estimate}. 
%Finally, for  $A\subset X_i^n$, there exists $\tilde A\subset X_i^n$ with $\L^1(A)=\L^1(\tilde A)$ such that
% \[
% \beta^2\int_A |\ell^n|^2\d x\lesssim n^2\int_{\tilde A} |u_n'(x)-u_*'(x)|^2\d x= \int_{\tilde A} |g_n'(x)|^2\d x\,,
% \]
% and hence equiintegrability of $|\ell^n|^2$ follows from equiintegrability of $|g_n'|^2$.
\end{proof}

\section{Proof of compactness and lower bound}
\label{sec:proof-comp-lower}
\begin{proof}[Proof of Theorem \ref{thm:Fjgamma} (o) and (i)]
% By the boundedness of $\mathcal F(u_n)$ and the uniqueness of the minimizer, we have that
% \[
% u_n\wto u_* \quad \text{ in } W^{1,q}(I)\,.
% \]
% By $\mathcal F(u_n)\to\mathcal F(u_*)$ and Lemma \ref{lem:weaktostrong}, we obtain that also  $u_n'\to u_*'$ in $L^q$. 
% e claim that
%   \begin{equation}\label{eq:1}
% \limsup_{j} \frac12 n^{2}\delta^2 \mathcal F(u_*,u_n-u_*)=\limsup_j n^2(\mathcal F(u_n)-\mathcal F(u_*))\,.
% \end{equation}
% Indeed,
By our assumptions on $\mathcal L$, $\mathcal F$ is $C^2$ G\^ateaux differentiable, and Taylor's theorem yields
  \begin{equation}\label{eq:19}
    \begin{split}
      \mathcal F(u_n)-\mathcal F(u_*)&= \underbrace{\delta \mathcal F(u_*,u_n-u_*)}_{=0}\\
                                     &\quad+\int_0^1(1-\tau)\delta^2 \mathcal F(u_*+\tau(u_n-u_*),u_n-u_*)\d \tau\,.
    \end{split}
  \end{equation}
We will now use the notation $g_n(x):=n(u_n(x)-u_*(x))$ from Section \ref{sec:auxiliary-notation}. Additionally we set
\[
G_n(x)= \begin{pmatrix}
    g_n(x)\\ g_n'(x)\end{pmatrix}\,.
\]
We may then write
  \begin{equation}\label{eq:51}
  \begin{split}
n^2(\mathcal F(u_n)-\mathcal F(u_*))&= 
\int_0^1(1-\tau)\delta^2 \mathcal F\left(u_*,\frac{\tau}{n}g_n\right)\d \tau\\
&= \int_I\int_0^1(1-\tau)\nabla_{(z,p)}^2\mathcal L\left(x,u_*+\frac{\tau}{n}g_n,u_*'+\frac{\tau}{n}g_n'\right):G_n(x)\otimes G_n(x)
\d \tau\d x\,.
\end{split}
\end{equation}
By  assumption (A1), we obtain that
\[
n^2(\mathcal F(u_n)-\mathcal F(u_*))% \geq 
% \int_I\int_0^1(1-\tau)G_n(x)^T\nabla_{(z,p)}^2\mathcal L\left(x,u_*+\frac{\tau}{n}g_n,u_*'+\frac{\tau}{n}g_n'\right)G_n(t)\d x
% \d \tau\\
\geq \theta\int_I|g_n'(x)|^2\d x\,,
\]
% Let $I_\eta:=\{t\in I:c(t)\geq \eta\}$, and $I^*:=\cup_{\eta>0} I_\eta$. By assumption (A2), we have that $I\setminus I^*$ is a zero set.
and hence $ g_n'$ is bounded in $L^2(I)$ since we assume $\sup_n \mathcal F_n(u_n,y_n)<+\infty$. We may pass to a subsequence such that $g_n\wto g$ in $W^{1,2}_0(I;\R^N)$, and $G_n\wto G$ in $L^2(I;\R^{2N})$. 
By $\int_{I}y_n'(x)\d x\leq 1$ and standard compactness results for $BV$ functions (see e.g.~\cite{MR1857292}), we may now pass to a subsequence $y_n$ and some $y\in \MBV(I)$ such that
\[
y_n\wsto y \text{ in } BV(I)\,.
\]
This completes the proof of (o). After passing to a suitable subseqence,  we may assume from now on that $\mathcal F_n(u_n,y_n)$ converges to $\liminf_{n\to\infty}\mathcal F_n(u_n,y_n)$.

\medskip

% We set 
% \[
% f(x,v,w,\xi)= \int_0^1(1-\tau)\nabla_{(z,p)}^2\mathcal L\left(x,u_*(x)+\tau v,u_*'(x)+\tau w\right):\xi\otimes \xi
% \d \tau
% \]
% and note that $f\geq 0$, $f$ is Carath\'eodory (i.e., measurable in $x$ and continuous in $v,w,\xi$) and $\xi\mapsto f(x,v,w,\xi)$ is convex. Also, w
Recall $I_\beta=\{x\in I:|u_*''(x)|>\beta\}$ for $\beta>0$. 
We have that $g_n/n$ and $g_n'/n$ converge to 0 strongly in $L^2$. 
We note that the function 
\[
(x,v,w)\mapsto \int_0^1 (1-\tau)\nabla_{(z,p)}\mathcal L(x,u_*+\tau v,u_*'+\tau w)\d \tau
\]
is continuous in all of its variables, in particular Carath\'eodory. 
Hence, by a standard approximation argument (see e.g.~equation (3.31) in the proof of \cite[Theorem 3.23]{dacorogna2024introduction}), for every $\e>0$ there exists a subsequence (no relabeling) and a measureable set $I^{\e}_\beta\subset I_\beta$ such that $\L^1(I_\beta\setminus I^{\e}_\beta)\leq \e$ and 
  \begin{equation}\label{eq:53}
\int_{I_\beta^\e}\left|\int_0^1 (1-\tau)\nabla_{(z,p)}\mathcal L\left(x,u_*+\tau\frac{g_n}{n},u_*'+\tau\frac{g_n'}{n}\right)\d \tau-\frac12\nabla_{(z,p)}\mathcal L(x,u_*,u_*')\right|\d x<\e\,.
\end{equation}

\medskip

Let $\zeta\in C^0_c(I)$ have the following properties:
\begin{equation}\label{eq:zetadef}
\begin{split}
0&\leq\zeta\leq 1\\
\zeta&=0 \text{ on } I\setminus I_\beta^\e \\
\L^1(\{\zeta=1\})&\geq \L^1(I_\beta^\e)-\e\,.
\end{split}
\end{equation}

\medskip

We will now estimate
  \begin{equation}\label{eq:52}
  \begin{split}
\int_{I}& \zeta(x)\nabla^2_{(z,p)} \mathcal L(x,u_*,u_*'):G_n\otimes G_n\d x\\
&=\int_{I} \zeta(x)\Bigg(\nabla^2_{p} \mathcal L(x,u_*,u_*'):g_n'\otimes g_n'+2\nabla_{z}\nabla_{p} \mathcal L(x,u_*,u_*'):g_n\otimes g_n'\\
&\quad+\nabla^2_{z} \mathcal L(x,u_*,u_*'):g_n\otimes g_n\Bigg)\d x\,.
\end{split}
\end{equation}
By the strong convergence $g_n\to g$  and the weak convergence $g_n'\wto g'$ in $L^2$ we obtain easily
\[
  \begin{split}
\lim_{n\to\infty} &\int_{I} \zeta(x)\Bigg(2\nabla_{z}\nabla_{p} \mathcal L(x,u_*,u_*'):g_n\otimes g_n'+
\nabla^2_{z} \mathcal L(x,u_*,u_*'):g_n\otimes g_n\Bigg)\d x\\
&= \int_{I} \zeta(x)\Bigg(2\nabla_{z}\nabla_{p} \mathcal L(x,u_*,u_*'):g\otimes g'+
\nabla^2_{z} \mathcal L(x,u_*,u_*'):g\otimes g\Bigg)\d x\,.
\end{split}
\]
In order to analyze the missing term 
\[
\int_{I} \underbrace{\zeta(x)\nabla^2_{p} \mathcal L(x,u_*,u_*')}_{W(x)}:g_n'\otimes g_n'\d x\,,
\]
we use the decomposition \eqref{eq:30} of $g_n'\otimes g_n'$ from Section \ref{sec:auxiliary-notation}. First we consider the contribution of the first term on the right hand in \eqref{eq:30}, $A^n\otimes A^n$. By Lemma \ref{lem:length_to_bounds},
\[
A^{n}\wto g'\quad\text { in }L^2(I_\beta)\,.
\]
Observing that  $W\in C^0(I;\R^{N\times N})$ has  values in the positive definite matrices, $\supp W\subset I_\beta$, and using standard  lower semicontinuity results for convex integral functionals under weak convergence (see e.g.~\cite[Theorem 3.23]{dacorogna2024introduction}), we obtain that   
  \begin{equation}\label{eq:13}
\liminf_{n\to\infty}\int_{I} W(x): A^{n}\otimes A^{n}\d x
% &=\liminf_{n\to\infty}\sum_{i\in\mathcal I_\beta^n} \int_{X_i^n} W(x_i^n):g_n'(x)\otimes g_n'(x)\d x\\
% &\geq \liminf_{n\to\infty}\sum_{i\in\mathcal I_\beta^n} \int_{X_i^n} W(x_i^n):\left(\fint_{X_i^n}g_n'\d t\right)\otimes \left(\fint_{X_i^n}g_n'\d t\right)\\
\geq\int_{I}W(x):g'(x)\otimes g'(x)\d x\,.
\end{equation}

Now we consider the term $\ell^n\left(A^{n}\otimes (B^{n}+R^{n})+(B^{n}+R^{n})\otimes A^{n}\right)$ on the right hand side in \eqref{eq:30}, again integrated against $W$. 
By Lemma \ref{lem:length_to_bounds},  $\supp W\subset I_\beta$, and H\"older's inequality, we have that 
\[
  \begin{split}
\left|\int_{I}\ell^nW:(A^n\otimes R^n+R^n\otimes A^n)\d x\right|
&\lesssim \|\ell^n\|_{L^2(I_\beta)}\|W\|_{L^\infty(I_\beta)}\|A^n\|_{L^2(I_\beta)}\|R_n\|_{L^\infty(I_\beta)}\\
&\to 0 \text{ as } n\to\infty
\end{split}
\]
and hence
  \begin{equation}
\label{eq:48}  \begin{split}
\lim_{n\to\infty}&\int_{I}\ell^n W:\left(A^{n}\otimes (B^{n}+R^{n})+(B^{n}+R^{n})\otimes A^{n}\right)\d x\\
&=\lim_{n\to\infty}\int_{I}\ell^n W:\left(A^{n}\otimes B^{n}+B^{n}\otimes A^{n}\right)\d x\,.
\end{split}
\end{equation}
Introducing the notation
\[
W^n(x)=\sum_{i=0}^{n-1}\chi_{X_i^n}(x)\fint_{X_i^n}W(t)\d t\,,
\]
and using the fact that $\|W-W^n\|_{L^\infty}\to 0$, we  obtain
that \eqref{eq:48}  is equal to  
\[
\lim_{n\to\infty}\int_{I}\ell^nW^n:\left(A^{n}\otimes B^{n}+B^{n}\otimes A^{n}\right)\d x\,.
\]
For every $i=0,\dots,n-1$, we have that
\[
\int_{X_i^n}\ell^nW^n:\left(A^{n}\otimes B^{n}+B^{n}\otimes A^{n}\right)\d x=0\,.
\]
% Using again Lemma \ref{lem:length_estimate} and  explicit integration of affine functions on $X_i^n$, we obtain
%   \begin{equation}\label{eq:39}
%   \begin{split}
% \lim_{n\to\infty}\int_{I_\beta}\varphi(x)\cdot \ell^n(x)\d x
% &=\lim_{n\to\infty}\int_{I_\beta}\left(\sum_{i\in\mathcal I_\beta^n}\chi_{X_i^n}(x)\fint_{X_i^n}\varphi(t)\d t\right)\cdot \ell^n(x)\d x\\
% &=0\,.
% \end{split}
% \end{equation}
% Furthermore,  in the limit $n\to \infty$, the measure of 
%   \begin{equation}\label{eq:40}
% \cup\{X^n_i:i\in \mathcal I_\beta^n, X^n_i\subsetneq I_\beta\}
% \end{equation}
% tends to 0, and hence the contribution to the integral can be neglected by the equiintegrability of $|A^n\ell^n|$, which follows from Lemma \ref{lem:length_to_bounds}.
Hence 
  \begin{equation}\label{eq:41}
\lim_{n\to\infty}\int_{I}\ell^nW:\left(A^{n}\otimes (B^{n}+R^{n})+(B^{n}+R^{n})\otimes A^{n}\right)\d x=0\,.
\end{equation}
% % also used   \eqref{eq:26} and \eqref{eq:27} which imply that the contribution of the indicies $i$ for which $\bar X_i^n\setminus y_n(I_\beta)\neq\emptyset$ can be ignored in the limit $n\to\infty$.
% % Combining \eqref{eq:39} with  \eqref{eq:11}, we obtain 
% %   \begin{equation}\label{eq:12}
% % L^{n,\beta}\wto 0 \text{ in } L^2(I;\R^N)\,.
% % \end{equation}
% % By \eqref{eq:7} and \eqref{eq:11}, the term
% % \[
% % L^{n,\beta}\left(A^{n,\beta}\otimes (B^{n,\beta}+R^{n,\beta})+(B^{n,\beta}+R^{n,\beta})\otimes A^{n,\beta}\right) 
% % \]
% % is bounded in $L^1$. By \eqref{eq:12}, we obtain 
% %   \begin{equation}\label{eq:15}
% % L^{n,\beta}\left(A^{n,\beta}\otimes (B^{n,\beta}+R^{n,\beta})+(B^{n,\beta}+R^{n,\beta})\otimes A^{n,\beta}\right) 
% % \wto 0 \quad\text{ in }L^1(I;\R^N)\,.
% % \end{equation}
Now we analyse the contribution of the term $|\ell^n|^2 (B^{n}+R^{n})\otimes (B^{n}+R^{n})$ on the right hand side in \eqref{eq:30}. Then, by the bounds on $\|\ell^n\|_{L^2},\|B^n\|_{L^\infty}$ and the fact $\|R^n\|_{L^\infty}\to 0$ obtained in Lemma \ref{lem:length_to_bounds}, and $\|W-W^n\|_{L^\infty}\to 0$ :
\begin{equation}\label{eq:29}
\begin{split}
\liminf_{n\to\infty}\int_{I_\beta} |\ell^n|^2W:(B^{n}+R^{n})\otimes (B^{n}+R^{n})\d x
&=\liminf_{n\to\infty}\int_{I_\beta} |\ell^n|^2W:B^{n}\otimes B^{n}\d x\\
&=\liminf_{n\to\infty}\int_{I_\beta} |\ell^n|^2W^n:B^{n}\otimes B^{n}\d x\,.
\end{split}
\end{equation}
For every $X_i^n$, we may treat $W^n,B^n$ as constants and obtain
\[
  \begin{split}
\int_{X_i^n}|\ell^n|^2W^n:B^{n}\otimes B^{n}\d x&=W^n:B^n\otimes B^n\int_{X_i^n}n^2 (x-x_i^n)^2\d x\\
% &=W^n:B^n\otimes B^n\int_{\bar X_i^n}n^2 \bar y_n'(\bar x)^3(\bar x-\bar x_i^n)^2\d \bar x\\
&=W^n:B^n\otimes B^n\int_{ X_i^n}\frac{(b-a)^2}{12}\frac{1}{y_n'^2}\d  x\,.
\end{split}
\]
%CoV wrt \bar y_n okay, since the case \bar y_n'=+\infty  corresponds to |X_i^n|=0
Summing over all $i=0,\dots, n-1$, and using the strong convergences $B_n\to u_*''$, $W^n\to W$ in $L^\infty$, we get
  \begin{equation}\label{eq:42}
  \begin{split}
\liminf_{n\to\infty}&\int_{I}|\ell^n|^2 W:(B^{n}+R^{n})\otimes (B^{n}+R^{n})\d x\\
&=\frac{(b-a)^2}{12}\liminf_{n\to\infty}\int_{I} W^n:B^n\otimes B^n \frac{1}{y_n'^2}\d  x\\
% &\geq \frac{(b-a)^2}{12}\int_{I(I_\beta)} \left(W:u_*'' \otimes u_*''\right)\circ\bar y\, \bar y'^3\d \bar x\\
&=\frac{(b-a)^2}{12}\liminf_{n\to\infty}\int_{I} W:u_*''\otimes u_*'' \frac{1}{y_n'^2}\d  x\,.
\end{split}
\end{equation}
By Lemma \ref{lem:weak_conv_conv}, we obtain 
  \begin{equation}\label{eq:37}
    \liminf_{n\to\infty}\int_{I} W:u_*''\otimes u_*'' \frac{1}{y_n'^2}\d  x\geq
    \int_{I} W:u_*''\otimes u_*'' \left|\frac{\d y'}{\d\L^1}\right|^{-2}\d  x \,.
  \end{equation}
Recalling the definition of $\zeta$ in \eqref{eq:zetadef}, and sending $\e,\beta$ to 0, $\zeta\equiv \zeta(\e,\beta)$ can be chosen such as to converge monotonously in $L^1$ to the characteristic function of $I^*:=I\setminus (u_*'')^{-1}(\{0\})$. Hence we obtain by \eqref{eq:53}, \eqref{eq:52},   \eqref{eq:42}, \eqref{eq:37}, and the monotone convergence theorem that
\[
  \begin{split}
 \int_{I^*}&\nabla^2_{(z,p)}\mathcal L: G\otimes G+
\frac{(b-a)^2}{12}\nabla_p^2\mathcal L(x,u_*,u_*'):u_*''\otimes u_*'' \left|\frac{\d y'}{\d\L^1}\right|^{-2}\d x\\
&\leq \liminf_{n\to\infty} \int_{I^*} \nabla^2_{(z,p)}\mathcal L: G_n\otimes G_n\d x\,.
\end{split}
\]
It remains to estimate the part of the integral on $I\setminus I^*$.
Here we can directly use the strong convergence $n^{-1}g_n,n^{-1}g_n'\to 0$ in $L^2$
% \[
% \int_0^1(1-\tau)\nabla_{(z,p)}^2\mathcal L\left(x,u_*+\tau\frac{g_n}{n},u_*'+\tau\frac{g_n'}{n}\right)\d \tau\to \frac12\mathcal L(x,u_*,u_*')\,,
% \]
 and the weak convergence $g_n\wto g_n$ in $W^{1,2}_0(I)$ to call upon \cite[Theorem 3.23]{dacorogna2024introduction} once more to obtain
\[
  \begin{split}
    \liminf_{n\to\infty}&\int_{I\setminus I^*} \int_0^1(1-\tau)\nabla_{(z,p)}^2\mathcal L\left(x,u_*+\tau\frac{g_n}{n},u_*'+\tau\frac{g_n'}{n}\right): G_n\otimes G_n \d \tau\d x\\
 &\geq \frac12 \int_{I\setminus I^*} \mathcal L(x,u_*,u_*'): G\otimes G\d x\,.
  \end{split}
\]
Summarizing, we get
\[
  \begin{split}
\frac12\delta^2\mathcal F(u_*,g)&+\frac{(b-a)^2}{24}\int_I\nabla^2_p\mathcal L(x,u_*,u_*'):u_*''\otimes u_*'' \left|\frac{\d y'}{\d\L^1}\right|^{-2}\d x\\
 &=\int_{I}\frac12\nabla^2_{(z,p)}\mathcal L: G\otimes G+
\frac{(b-a)^2}{24}\nabla^2_p\mathcal L:u_*''\otimes u_*'' \left|\frac{\d y'}{\d\L^1}\right|^{-2}\d x\\
&\leq \frac12\liminf_{n\to\infty}
\int_{I}\nabla^2_{(z,p)}\mathcal L: G_n\otimes G_n\d x\\
&=\liminf_{n\to\infty} \mathcal F_n(u_n,y_n)\,.
\end{split}
\]

\end{proof}

\section{Proof of the upper bound}
\label{sec:proof-upper-bound}

\begin{proof}[Proof of Theorem \ref{thm:Fjgamma} (ii)]
\emph{Step 1: Regularization of $y'$, definition of the recovery sequence for the regularized function.} Let  $g\in W^{1,2}_0(I)$ and $y\in \MBV(I)$ be as in the statement of the upper bound. We may assume $\mathcal F^*(g,y)<+\infty$, otherwise there is nothing to show. % Hence  $\frac{1}{y'}\in L^2(I_\beta)$ for every $\beta>0$. 
We write $\bar y=y^{-1}$. % , and obtain by a change of variables  $\chi_{y(I_\beta)}\bar y'\in L^3(I)$.
Let $\delta>0$ be a regularization parameter, and $Y^{(\delta)}$ be defined by $Y^{(\delta)}(a)=a$, and   \begin{equation}\label{eq:55}
    (Y^{(\delta)})'=\frac{1}{1+(b-a)\delta}\left(y'+\delta\L^1\right)\,,
  \end{equation}
For the steps 1-3 in the current proof, we will suppress the dependence on $\delta$ in the notation, to make it reappear in step 4 below.
% Our definitions  imply that $y\in \MBV(I)$ with $\frac{1}{y'^2}\leq \delta^{-2}$. 
As we have done before, we write $\bar Y=Y^{-1}$.
  For $n\in \N$, we  set
  \begin{equation}\label{eq:56}
    \bar Y_n\left(a+\frac{(b-a)i}{n}\right)=\bar Y\left(a+\frac{(b-a)i}{n}\right)\quad\text{ for }i=0,\dots,n\,,
  \end{equation}
define $\bar Y_n$ by affine interpolation on $\bar X_i^n$ (see \eqref{eq:23}), and write $Y_n=\bar Y_n^{-1}$ . This defines in particular $X_i^n=\bar Y_n(\bar X_i^n)$ . From our definition \eqref{eq:56} we obtain
\begin{equation}\label{eq:57}
  |Y_n'|^{-2}\to|Y'|^{-2} \quad\text{ in } L^1 \,, \qquad Y_n\wsto  Y \text{ in }BV(I)\,.
\end{equation}
From \eqref{eq:55}, we get 
  \begin{equation}\label{eq:49}
   n\left( \max_{i=0,\dots,n-1}\L^1(X_i^n)\right)\simeq \left\|\frac{1}{Y_n'}\right\|_{L^\infty}\simeq\|\ell^n\|_{L^\infty}\lesssim \delta^{-1}\,.
  \end{equation}
In particular, 
  \begin{equation}\label{eq:60}
\L^1(X_i^n)\to 0 \quad\text{ as } n\to\infty\,.
\end{equation}
Next we will define $u_n$ in several steps.   First, let 
\[
\alpha_n(t)=\begin{cases}\mathrm{sgn}(t) n &\text{ if } |t|>n\\
t &\text{ if }|t|\leq n\end{cases}
\]
and 
\[
\tilde g_n'(x):=\alpha_n(g'(x))-\fint_a^b\alpha_n(g'(t))\d t\,,\qquad
\tilde g_n(x)=\int_a^x \tilde g_n'(t)\d t\,.
\]
With these definitions we clearly have $\tilde g_n\to g$ in $W^{1,2}_0(I;\R^n)$, $\|\tilde g_n\|_{L^\infty}\leq C$,  $\|\tilde g_n'\|_{L^\infty}\leq n$ and 
\begin{equation}
\label{eq:gnstrong}
\lim_{n\to\infty}\left\|\left(\sum_{i=0}^{n-1}\chi_{X_i^n}\fint_{X_i^n}\tilde g'_n(t)\d t\right)-g'\right\|_{L^2(I)}=0\,.
\end{equation}
Then we set 
\[
  \begin{split}
% g_n'(x)&=\fint_{X_i^n} \tilde g_n'(t)\d t \quad\text{ for } x\in X_i^n\,,\\
u_n'(x)&=\fint_{X_i^n}\left(u_*'+\frac{\tilde g_n'}{n}\right)\d t \quad\text{ for } x\in X_i^n\,,\\
u_n(x)&=\int_a^x u_n'(t)\d t \quad\text{ for } x\in I\,.
% u_n'(x)=&\,.
\end{split}
\]
As before we set $g_n=n(u_n-u_*)$, which reads 
\[
g_n(x)=\fint_{X_i^n}\tilde g_n'(t)\d t +n\left(u_*'(x)-\fint_{X_i^n}u_*'(t)\d t\right)\quad\text{ for }x \in X_i^n\,.
\]
Clearly    $\|g_n\|_{L^\infty}\leq C$ and $\|g_n'\|_{L^\infty}\leq n$. By \eqref{eq:60}, we also have $g_n\wto g$ in $W^{1,2}_0(I;\R^N)$.

\medskip

With the same notation as in the previous section, 
\[
\mathcal F_n(u_n,y_n)
=
\int_I\int_0^1(1-\tau)\nabla_{(z,p)}^2\mathcal L\left(x,u_*+\frac{\tau}{n}g_n,u_*'+\frac{\tau}{n}g_n'\right):G_n(x)\otimes G_n(x)
\d \tau\d x\,.
\]

\medskip

\emph{Step 2: Equiintegrability, partial passing to the limit.} We claim that $G_n\otimes G_n$ is equiintegrable. Indeed, $|g_ng_n'|$ and $|g_n|^2$ are weakly converging in $L^1$, and hence equiintegrable, by the strong convergence of $g_n$ in $L^2$ and the weak convergence of $g_n'$ in $L^2$. It remains to show equiintegrability of $|g_n'|^2$. 
We will once more use the notation $A^n,B^n,R^n,\ell^n$ from Section \ref{sec:auxiliary-notation}, satisfying $g_n'=A^n+\ell^n(B^n+R^n)$.
Recalling $A^n=\sum_{i=0}^{n-1}\chi_{X_i^n}\fint_{X_i^n}g_n'\d t$, the strong convergence \eqref{eq:gnstrong} also implies 
\begin{equation}\label{eq:Anstrong}
A^n\to g' \qquad \text{ in }L^2(I)\,.
\end{equation}
Now for $A\subset I$, 
\[
  \begin{split}
\limsup_{n\to\infty}&\int_A |g_n'|^2\d x\\
&=\limsup_{n\to\infty}\int_ A \left(|A^n|^2+\ell^n(A^n\otimes (B^n+R^n)+(B^n+R^n)\otimes A^n)+|\ell^n|^2|B^n+R^n|^2\right)\d x\\
&\lesssim \limsup_{n\to\infty}\int_A \left(|A^n|^2+|(Y_n')^{-1}A^nB^n|+|Y_n'|^{-2}|B^n|^2\right)\d x\,.
\end{split}
\]
From the strong convergence of $A^n$ in $L^2$, as well as the convergence of $B^n$ in $L^\infty$ and \eqref{eq:49} we deduce that $|g_n'|^2$ is majorized by a strongly convergent sequence in $L^1$, which implies in particular the equiintegrability of $|g_n'|^2$.
Let $ \e>0$. Again we appeal to \cite[Theorem 3.23]{dacorogna2024introduction} to obtain the existence of $I^\e\subset I$ such that $\L^1(I\setminus I_\e)<\e$ and 
\[
\begin{split}
    \int_{I^\e}\Bigg|&\Bigg(\int_0^1(1-\tau)\nabla^2_{(z,p)}\mathcal L\left(x,u_*+\frac{\tau}{n} g_n,u_*'+\frac{\tau}{n} g_n'\right)\d\tau\\
    &\quad-\frac12\nabla^2_{(z,p)}\mathcal L(x,u_*,u_*')\Bigg):G_n\otimes G_n \Bigg|\d x<\e\,.
\end{split}
\]
By equiintegrability of $G_n\otimes G_n$ and uniform boundedness of \[
x\mapsto\int_0^1(1-\tau)\nabla^2_{(z,p)}\mathcal L\left(x,u_*+\frac{\tau}{n} g_n,u_*'+\frac{\tau}{n} g_n'\right)\d\tau
\]
in $L^\infty$, we have that the product of these is equiintegrable as well, and hence
\[
\begin{split}
\int_{I\setminus I^\e}\Bigg|&\Bigg(\int_0^1(1-\tau)\nabla^2_{(z,p)}\mathcal L\left(x,u_*+\frac{\tau}{n} g_n,u_*'+\frac{\tau}{n} g_n'\right)\d \tau\\
&\quad-\frac12\nabla^2_{(z,p)}\mathcal L(x,u_*,u_*')\Bigg):G_n\otimes G_n \Bigg|\d x\to 0 
\end{split}
\]
as $\e\to 0$, uniformly in $n$. 
Sending $\e\to 0$, we see that
  \begin{equation}\label{eq:59}
    \limsup_{n\to\infty}\mathcal F_n(u_n,y_n)=
    \limsup_{n\to\infty}\frac12\int_I \nabla^2_{(z,p)}\mathcal L(x,u_*,u_*'):G_n\otimes G_n\d x\,.
  \end{equation}

\medskip

\emph{Step 3: Explicit calculation on the microscale.}
Now we write 
  \begin{equation*}
    \begin{split}
    \fint_{X_i^n}\nabla^2_p\mathcal L(t,u_*,u_*')\d t&=\mathcal L^n_i\\
\mathcal L^n(x)&=\sum_{i=0}^{n-1}\mathcal L_i^n \chi_{X_i^n}(x)\,.
\end{split}
\end{equation*}

By Lemma \ref{lem:length_to_bounds} and step 1,
  \begin{equation}\label{eq:2}
    \begin{split}
R^n&\to 0,\quad  B^n\to u_*'' \quad\text{ in }L^\infty(I)\,,\\
A^{n}&\to g' \text{ in }L^2(I;\R^N)\,.
\end{split}
\end{equation}
Additionally, 
  \begin{equation}\label{eq:3}
    \mathcal L^n\to \nabla_p^2\mathcal L(\cdot,u_*,u_*')\quad\text{ in }L^\infty(I;\R^{N\times N})\,.
  \end{equation}
We use  \eqref{eq:30}, which yields a decomposition of $\int_{I}\nabla_p^2\mathcal L:g_n'\otimes g_n'\d x$ into three terms: % , we get
The first term in this decomposition reads
\[
\lim_{n\to\infty} \int_{I} \nabla^2_{p}\mathcal L(x,u_*,u_*'):A^n\otimes A^n\d x=
 \int_{I} \nabla^2_{p}\mathcal L(x,u_*,u_*'):g'\otimes g'\d x\,,
\]
where we have used the second line of \eqref{eq:2}.
The second contribution is
\begin{equation}\label{eq:54}
  \begin{split}
    \lim_{n\to\infty}& \int_{I}\ell^n \nabla^2_{p}\mathcal L(x,u_*,u_*'):\left(A^n\otimes (B^n+R^n)+(B^n+R^n)\otimes A^n\right)\d x\\
                     &= \lim_{n\to\infty} \int_{I}\ell^n \mathcal L^n:\left(A^n\otimes B^n+B^n\otimes A^n\right)\d x=0\,.
  \end{split}
\end{equation}
To obtain the second line from the first one above,  we have used the bounds from \eqref{eq:49}, \eqref{eq:2} and \eqref{eq:3}. 
The third term in the decomposition is
\[
  \begin{split}
    \lim_{n\to\infty} &\int_{I} |\ell^n|^2\nabla^2_{p}\mathcal L(x,u_*,u_*'):(B^n+R^n)\otimes(B^n+R^n)\d x\\
&=\lim_{n\to\infty} \int_{I}|\ell^n|^2 \mathcal L^n: B^n\otimes B^n\d x\\
% &=\lim_{n\to\infty} \sum_{i=0}^{n-1}\int_I\mathcal L(x,u_*,u_*'): B^n\otimes B^nn^2 (x-x_i^n)^2\d x\nabla^2_{p}\d x\\
&=\frac{(b-a)^2}{12}\lim_{n\to\infty}\int_{I}\mathcal L^n: B^n\otimes B^n \frac{1}{Y_n'^2}\d x\,,
\end{split}
\]
where we have  used \eqref{eq:49}, \eqref{eq:2} and \eqref{eq:3} to obtain the first equality, and to
obtain the second one, we have used the explicit integration of a quadratic function on $X_i^n$, 
\[
\int_{X^n_i}|\ell^n|^2\d x=\frac{(b-a)^3}{12 n} \frac{1}{Y_n'^3|_{X_i^n}}=\L^1(X_i^n)\frac{(b-a)^2}{12 } \frac{1}{Y_n'^2|_{X_i^n}}\,,
\]
and summed over $i$. 
 Again using \eqref{eq:2} and \eqref{eq:3}, and additionally \eqref{eq:57},  we obtain
\[
\lim_{n\to\infty}\int_I\mathcal L^n: B^n\otimes B^n \frac{1}{Y_n'^2}\d x
= \int_I \nabla_p^2\mathcal L(x,u_*,u_*'):u_*''\otimes u_*''\frac{1}{Y'^2}\d x\,.
\]
Putting everything together, we have  obtained
\begin{equation}\label{eq:58}
  \begin{split}
    \lim_{n\to\infty} \int_I \nabla^2_{p}\mathcal L(x,u_*,u_*'):g_n'\otimes g_n'\d x 
    &=\int_I \nabla^2_{p}\mathcal L(x,u_*,u_*'):g'\otimes g'\d x\\
    &\quad+\frac{(b-a)^2}{12}\int_I \nabla^2_{p}\mathcal L(x,u_*,u_*'):u_*''\otimes u_*''\frac{1}{Y'^2}\d x\,.
  \end{split}
\end{equation}
By the strong convergence $g_n\to g$ in $L^2$ and the weak convergence $g_n'\wto g_n$ in $L^2$ we obtain
\[
    \lim_{n\to\infty}\int_I2\nabla_z\nabla_p\mathcal L:g_n\otimes g_n'+\nabla_z^2 \mathcal L:
                       g_n\otimes g_n\d x
                     =
                       \int_I2\nabla_z\nabla_p\mathcal L:g\otimes g'+\nabla_z^2 \mathcal L:
                       g\otimes g\d x\,.
\]

Combining the latter with \eqref{eq:58} and recalling \eqref{eq:59} yields the  upper bound for the regularized function $Y$, 
  \begin{equation}\label{eq:38}
\lim_{n\to \infty} \mathcal F_n(u_n,Y_n)=\mathcal F^*(g,Y)\,.
\end{equation}

\medskip

\emph{Step 4: Choosing $\delta$ as a function of $n$.}
We now make visible the dependence of $Y$ and the sequences $g_n,Y_n$ constructed in steps 1-3 on the regularization parameter $\delta$ by denoting them by $Y^{(\delta)},Y_n^{(\delta)},g_n^{(\delta)}$.  Writing $W:=\nabla^2_{(z,p)}\mathcal L(x,u_*,u_*'):u_*''\otimes u_*''$, we may assume that $W\left|\frac{\d y'}{\d\L^1}\right|^{-2}$ is integrable, and that hence
\[
W |(Y^{(\delta_k)})'|^{-2}\to W\left|\frac{\d y'}{\d\L^1}\right|^{-2}\quad\text{ in } L^1\,.
\]    
On $\MBV(I)$, the $BV$ weak-* convergence is metrizable (see e.g.~\cite[Lemma 1.4.1]{albiac2006topics}); let us denote a metric on this set by $d$. 
Let $(\delta_k)_{k\in\N}$ be a decreasing null sequence. Choose a strictly increasing sequence $(n_k)_{k\in\N}$ such that for every $n\geq n_k$,
\[
  \begin{split}
d(Y_{n}^{(\delta_{k})},Y^{(\delta_{k})})+\|g^{(\delta_k)}-g_{n}^{(\delta_k)}\|_{W^{1,2}}&\leq d(Y^{(\delta_{k})},y)\\
\text{ and }\quad|\mathcal F_{n}(u^{(\delta_{k})}_{n},Y_{n}^{(\delta_{k})})-\mathcal F^*(u^{(\delta_k)},Y^{(\delta_{k})})|&\leq |\mathcal F^*(g,y)-\mathcal F^*(g^{(\delta_k)},Y^{(\delta_{k})})|\,.
\end{split}
\]
The latter inequality is fulfilled for $n_k$ large enough by \eqref{eq:38}.
Now for $n\in\N$, define $\delta_n$ by $\delta_n=\delta_{n_k}$ for $n_k\leq n<n_{k+1}$. 
With this choice, $Y_{n}^{(\delta_{n})}\wsto y$ in $BV(I)$, $g^{(\delta_{n})}_{n}\to g$ in $W^{1,2}_0(I)$, and 
\[
\lim_{n\to\infty\to\infty}\mathcal F_{n}(u_{n}^{(\delta_{n})},Y_{n}^{(\delta_{n})})=\mathcal F^*(g,y)\,,
\]
proving the upper bound for $y_n:=Y_n^{(\delta_n)}$ and $u_n:=u_n^{(\delta_n)}$.
% Inserting this into the result from step 4 in \eqref{eq:36}, this yields the upper bound in Theorem \ref{thm:Fjgamma}.
\end{proof}

\section{Numerical experiments}\label{sec:examples}
% This section presents an analysis of the proposed mesh optimization method. It begins with the Asymptotic Mesh Formulation (AMF) in the continuous limit, corresponding to an infinite number of nodes, which provides insight into the optimal node distribution. This is followed by a comparison with discrete mesh configurations optimized via Gradient Descent (GD), highlighting the convergence behavior and quantifying the discretizations error as a function of the number of nodes.\\

\subsection{Application of AMF}

We fix our domain to be $I=[a,b]=[0,1]$ and consider the Lagrangian
  \begin{equation}\label{eq:47}
  \begin{split}
\mathcal L: I\times\R\times\R&\to\R\\
  (x,z,p)&\mapsto \frac12|p|^2+f(x)\cdot z\,,
\end{split}
\end{equation}
i.e., the integrand of the Dirichlet energy plus a forcing term. We consider the minimization problem over the set of functions with zero boundary conditions,  $\mathcal A(U_0,U_1)=W^{1,2}_0(I)$. The unique minimizer will be denoted by $u_*$, and satisfies the Euler-Lagrange equation $u_*'' = f(x)$.
This 
yields in the limit, according to \eqref{eq:application}:
  \begin{equation}\label{eq:44}
\mathcal{F}^*(g, y) = \frac12\int_I |g'|^2\d x 
+ \frac{1}{24} \int_I |u_*''|^2 \, |y'(x)|^{-2} \, \mathrm{d}x\,,
\end{equation}
where $y\in \MBV(I)$ and $g\in W^{1,2}_0(I)$. 
We will consider the numerical minimization of this functional. Clearly the variables $g,y$ are decoupled, and the minimization in $g$ is trivial: It is given by $g=0$, which we assume from now on.
The  Euler--Lagrange equation associated to \eqref{eq:44} with $g=0$ leads to the optimality condition for $y$:
\[
f(x)^2 \, |y'(x)|^{-3} = \lambda_0,
\]
for some constant \( \lambda_0 > 0 \). Solving for \( y'(x) \) gives:
\[
y'(x) = \left( \frac{f(x)^2}{\lambda_0} \right)^{1/3} = \frac{f(x)^{2/3}}{\lambda_0^{1/3}}.
\]
Integrating over \( x \in [0,1] \) and applying the boundary conditions \( y(0) = 0 \) and \( y(1) = 1 \), we obtain the normalized solution:
  \begin{equation}\label{eq:45}
y(x) = \frac{\displaystyle \int_0^x |f(s)|^{2/3} \, \mathrm{d}s}{\displaystyle \int_0^1 |f(s)|^{2/3} \, \mathrm{d}s}.
\end{equation}
This expression defines the mapping \( y : [0,1] \to [0,1] \) that minimizes the functional \( Y\mapsto \mathcal{F}^*(0, Y) \), and adapts the mesh node distribution according to the target density \( f \). The normalization ensures consistency with the prescribed boundary conditions.

\begin{figure}[h!]
    \centering
    \includegraphics[width=0.49\linewidth]{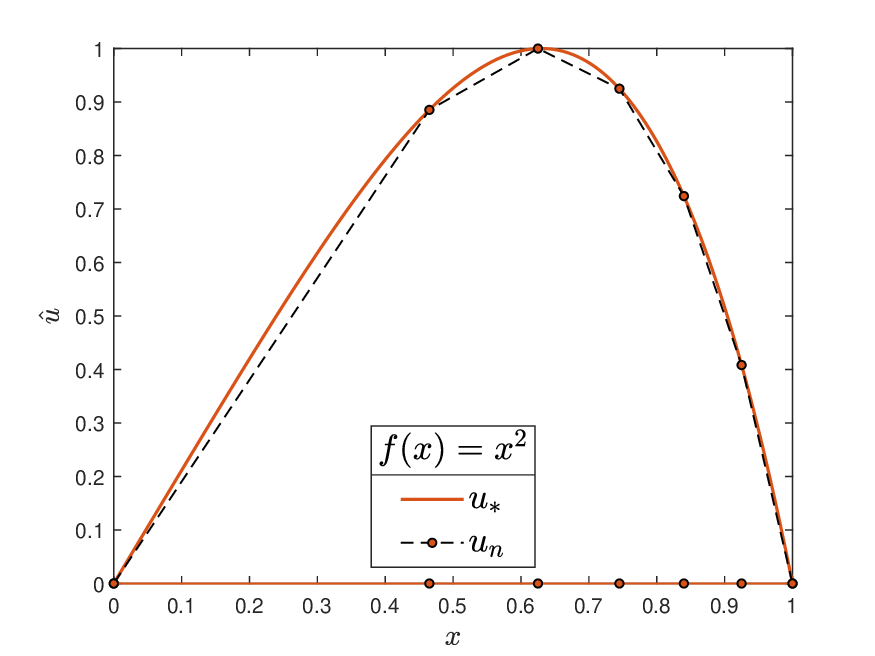}
        \includegraphics[width=0.49\linewidth]{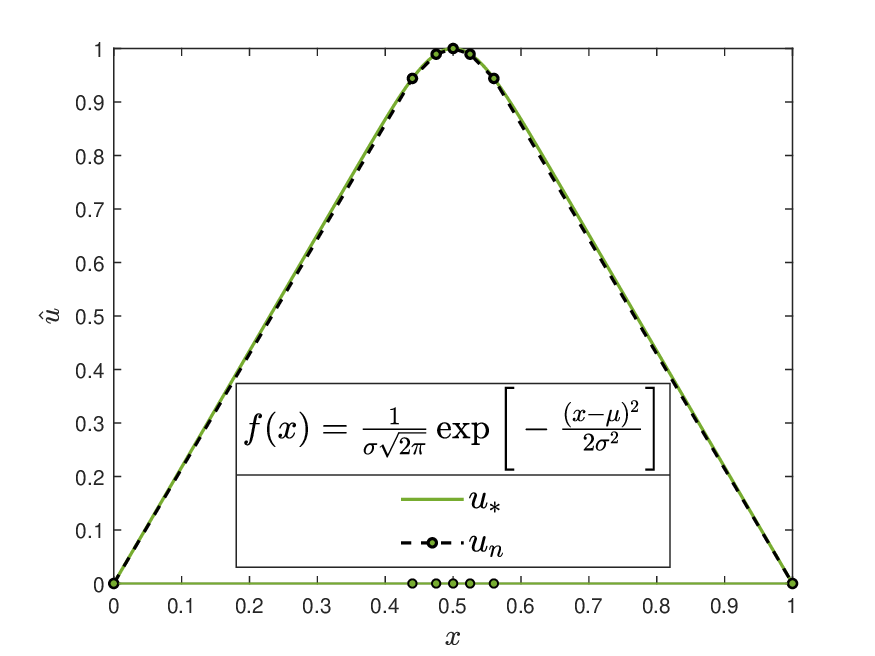}
    \caption{Optimal position of nodes for $f(x)=x^2$ and $f(x)=\frac{1}{\sigma\sqrt{2\pi}}\exp(  - \frac{(x-\mu)^2}{2\sigma^2})$, (with $\mu=0.5$ and $\sigma =0.05$). The dashed graph is the optimal piecewise affine function. For comparison, we display the exact solution $u_*$ (in color).} 
        \label{fig:fig2}
\end{figure}
% \newpage

\bigskip

Given the analytical expression of the mapping $y$ in the continuous limit (infinite number of nodes), it is natural to investigate its applicability in a discrete setting. An approximation of the optimal mesh  consisting of $n$ elements may be obtained by defining the elements via
  \begin{equation}\label{eq:46}
  X_i^n:=y(\bar X_i^n)=y([i/n,(i+1)/n))\,, \quad i=0,\dots,n-1\,.
\end{equation}
For later reference, we state this approach as an algorithm to find the optimal mesh with $n$ elements, that we label ``AMF'' for ``Asymptotic mesh functional'':

\begin{algo}[AMF]
\label{algo:AMF}
  \begin{enumerate}
  \item Compute the asymptotic optimal mesh via \eqref{eq:45}.
\item Determine the approximate optimal mesh with $n$ elements via \eqref{eq:46}.
  \end{enumerate}
\end{algo}

Figure~\ref{fig:fig2} illustrates the thusly obtained meshes for $n = 6$ elements, for the two cases $f(x)=x^2$ and $f(x)=\exp(-\pi|x-1/2|^2)$.% (two different cases $f$ introduced earlier. The solution to the problem is denoted by \( \hat{u} \), which corresponds to the normalization of the solution \( u \) by its maximum value. This normalized solution is constructed as a linear interpolation of the underlying algebraic expression. The mesh positions are then obtained by applying the transformation \( \bar{y} \) to a uniformly distributed reference mesh \( \bar{x} \). As anticipated, the mesh density adapts to the curvature of the exact solution, concentrating nodes in regions of high curvature. In the limiting case where $f$ corresponds to a  Gaussian function, the mesh concentrates on a localized region until collapses toward the discontinuity point for the Dirac-type singularity.

% \medskip

 % 

% \medskip

% This second part investigates the performance of the Asymptotic Mesh Formulation (AMF) in comparison with a gradient-based optimization approach, with particular attention to the influence of mesh resolution on convergence behavior.

% \medskip

% A notable strength of the AMF method lies in its ability to produce mesh mappings either in closed form or through direct integration of explicitly known functions. In many cases, the mapping $y$ can be derived algebraically. When this is not possible, a pseudo-algebraic formulation still permits an efficient construction of the optimal mesh. This analytic or semi-analytic character makes AMF particularly attractive for problems with known solution structure, enabling rapid generation of high-quality meshes without iterative procedures.

% \medskip

% By contrast, the gradient descent (GD) method introduces a more flexible---yet more computationally demanding---alternative. Its convergence rate tends to deteriorate as the number of discretization nodes increases. Moreover, the algorithm's efficiency and robustness are strongly dependent on the quality of the initial guess. In practice, the initial mesh configuration $x$ is often initialized using the AMF result, while the field variable $u$ is typically obtained by solving the Euler--Lagrange equation associated with the underlying variational formulation. The GD procedure, quite classic is used as reference and is described in Appendix.~\ref{appendix:DG description}.\\

\subsection{Comparison of the AMF algorithm with gradient descent}

We will now compare Algorithm \ref{algo:AMF} with a different approximation, namely the one obtained by conceiving of the node positions as additional variables and minimizing the energy via gradient descent. These variables are inherent to our notation $\mathcal F_n(u,y)$. Denoting the nodes in the reference and in the deformed configuration by 
\[
\bar\xi_i^n:=\frac{i}{n}\,,\quad \xi_i^n:=y^{-1}(\bar\xi_i^n)\,,
\]
respectively, we find that $\mathcal F_n(u,y)$  only depends on the $n-1$ node positions $\xi_i^n$, $i=1,\dots,n-1$, 
and the values of $u$ at $\xi_i^n$, $u_i^n\equiv u(\xi_i^n)$, $i=1,\dots,n-1$, where we assume that the value of the functional is finite.
We may write $\boldsymbol{\xi}^n:=(\xi_1^n,\dots,\xi_{n-1}^n)$, $\mathbf{u}^n:=(u_1^n,\dots,u_{n-1}^n)$,  and 
\[
\mathcal E_n(\boldsymbol{\xi}^n,\mathbf{u}^n):=\mathcal F_n(y,u)\,.
\]

% Assuming the form of the Lagrangian \eqref{eq:47}, 
% \[
% \mathcal F_n(u,y)=
% \begin{cases}
% \int_I\frac12u'^2+fu\d x & \text{ if }y\in \PAM^n(I) \text{ and } u\circ y^{-1} \\
% +\infty & \text{ else.}
% \end{cases}
% \]
% Since $y\in \PAM^n(I)$ is defined by 

For definiteness, we state the iterative gradient descent (GD) for $\mathcal E_n$:
\begin{algo}[GD]
  \label{algo:GD}
  \begin{enumerate}
  \item Initiate $(\boldsymbol{\xi}^{n,(0)},\mathbf{u}^{n,(0)})$, set $k=0$.
\item While 
\[
\| \nabla \mathcal E_n\|_1< 10^{-6}
\]
set 
\[
(\boldsymbol{\xi}^{n,(k+1)},\mathbf{u}^{n,(k+1)}) = (\boldsymbol{\xi}^{n,(k)},\mathbf{u}^{n,(k)}) - \eta \nabla \mathcal E_n( \boldsymbol{\xi}^{n,(k)},\mathbf{u}^{n,(k)})  \quad \text{with } \eta > 0,
\]
and $k\leftarrow k+1$.
  \end{enumerate}
\end{algo}

% $k$ is the GD iteration, the GD-rate is fixed as $\eta = n$ and the converged state is insured by :
% with the tolerance (tol $= 10^{-6}$) fixed.

\newcommand{\bu}{\mathbf{u}}
\newcommand{\bx}{\boldsymbol{\xi}}

The efficiency of GD is of course highly dependent on the choice of the initialization. If the latter is chosen far away from the optimum, the algorithm may become highly inefficient. The function $(\bx,\bu)\mapsto \mathcal E_n(\bx,\bu)$ is non-convex, and hence the algorithm might get stuck in local minima. For this reason, we study it only as an improvement of AMF. I.e., the initialization step of GD will be given by AMF. In this way, we will get an impression of the quality of algorithm \ref{algo:GD}.

Figure~\ref{fig:u_equi_DG_AMF} presents the relative errors between the exact solution  and the corresponding finite element approximation in $L^2$ and $W^{1,2}_0$ respectively,
\[
  \begin{split}
\| u_* - u_n \|_{L^2}^R &= \frac{\| u_* - u_n \|_{L^2}}{\| u_* \|_{L^2}}  \\
\| u_* - u_n \|_{W^{1,2}_0}^R &= \frac{\| u_*' - u_n' \|_{L^2}}{\| u'_* \|_{L^2}}\,.
\end{split}
\]
These errors are compared for different numbers of nodes, and for different approximation schemes: AMF, DG, and for comparison, equidistributed finite elements (no optimization over interval lengths).  The relative $L^2$ error associated with AMF is bounded above by that of equidistributed finite elements, and below by that of the gradient-descent-optimized mesh, while remaining close to the latter.

%, subject to a quadratic forcing term $f$. The details of the error measure on the u solution are presented in Appendix.~\ref{appendix:L2u}.

\begin{figure}[h!]
    \centering
    \includegraphics[width=0.49\linewidth]{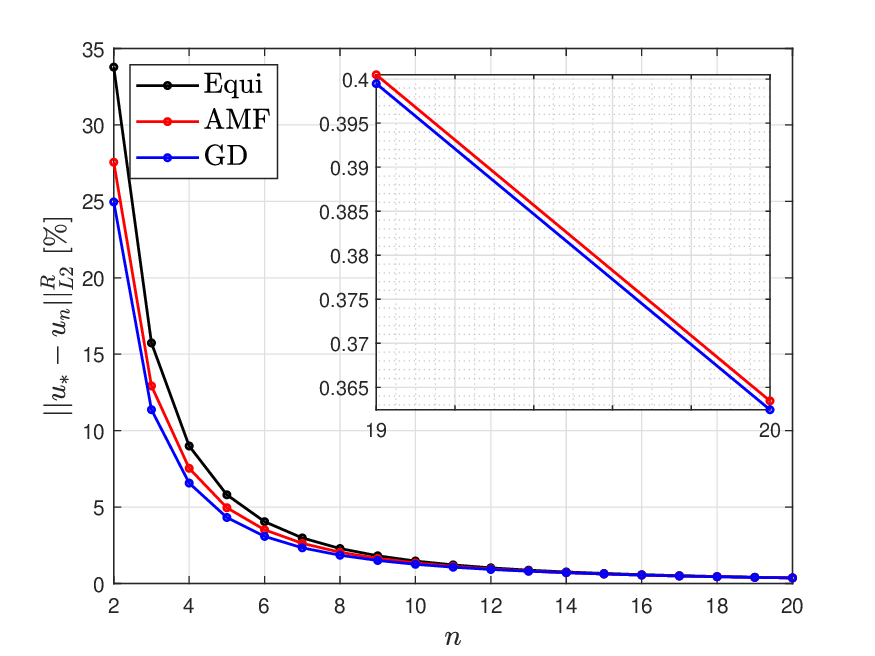}
        \includegraphics[width=0.49\linewidth]{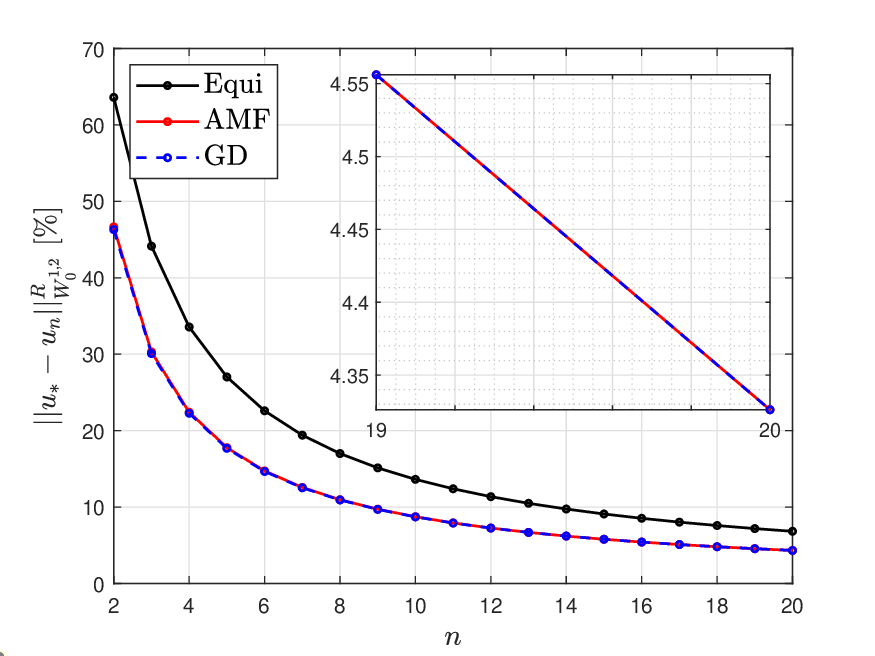}
    \caption{Relative error of the solution $u$ and $u'$ using an equal distributed mesh (black) with AMF (red), and descent gradient meshing method (blue).} 
    \label{fig:u_equi_DG_AMF}
\end{figure}

\begin{figure}[b!]
    \centering
        \includegraphics[width=0.49\linewidth]{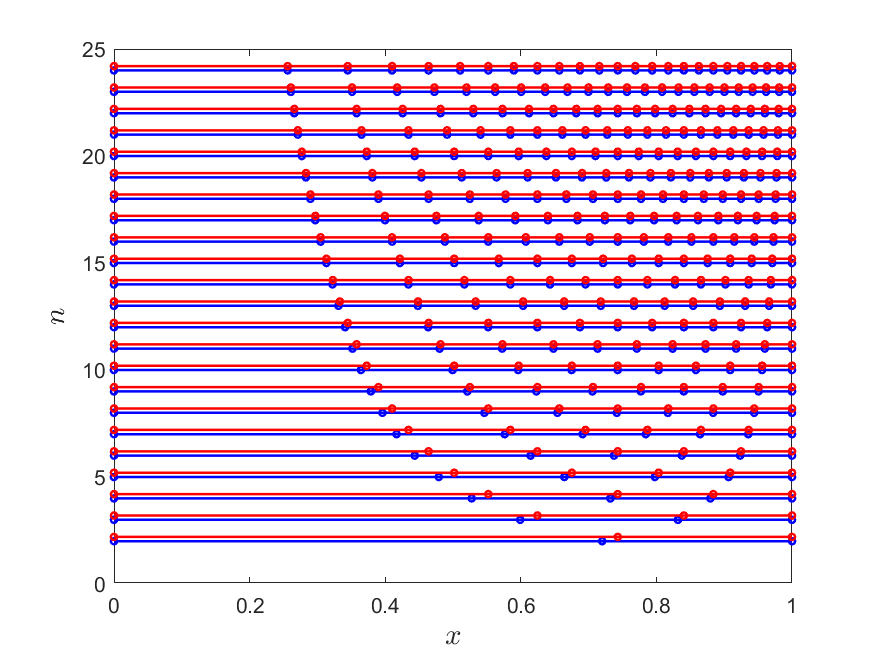}
        \includegraphics[width=0.49\linewidth]{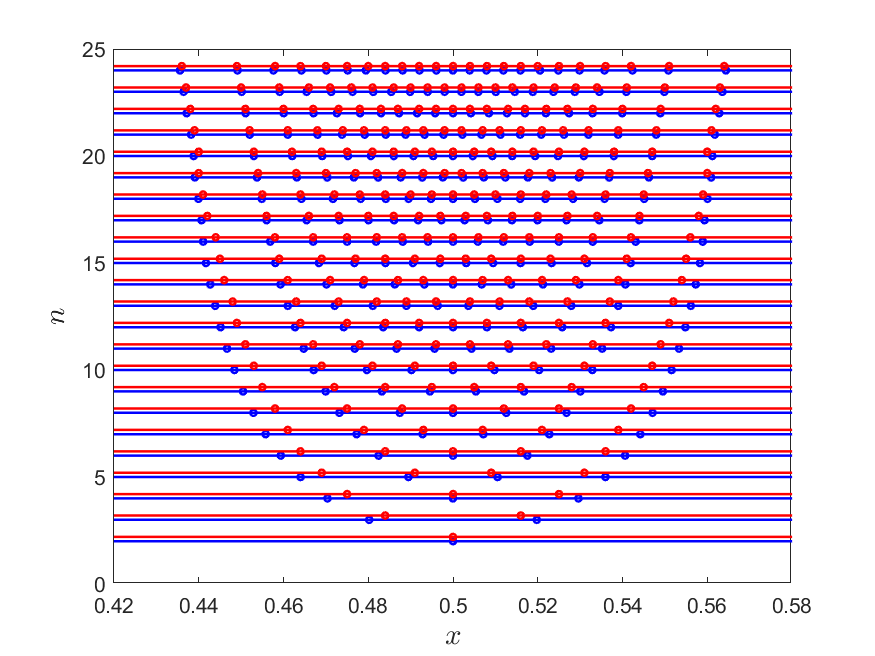}
    \caption{Mesh distribution error between GD and AMF method for different f (left) quadratic and (right) gaussian ($\mu=0.5,\sigma=0.03$).} 
    \label{fig:node_cinematic}
\end{figure}

Figure~\ref{fig:node_cinematic} displays the node positions for the meshes generated using the AMF and GD methods. A qualitative agreement between the two configurations is observed, with an increasingly accurate overlap as the number of nodes increases. In the following, the goal is to determine how far the AMF mesh is different compared to the optimized reference (GD) mesh.

In order to do so, we denote the minimizer  found by Algorithm \ref{algo:AMF} by $\bx_{AMF}\in \PAM^n(I)$ and the improvement obtained in Algorithm \ref{algo:GD} by $\bx_{GD}\in \PAM^n(I)$.

% In order to investigate this gap a quantitative comparison between the AMF and GD meshes is conducted as a function of the number of nodes defined on the error measure described in Appendix.\ref{appendix:L2x} 
\begin{figure}[h!]
    \centering
    \includegraphics[width=0.49\linewidth]{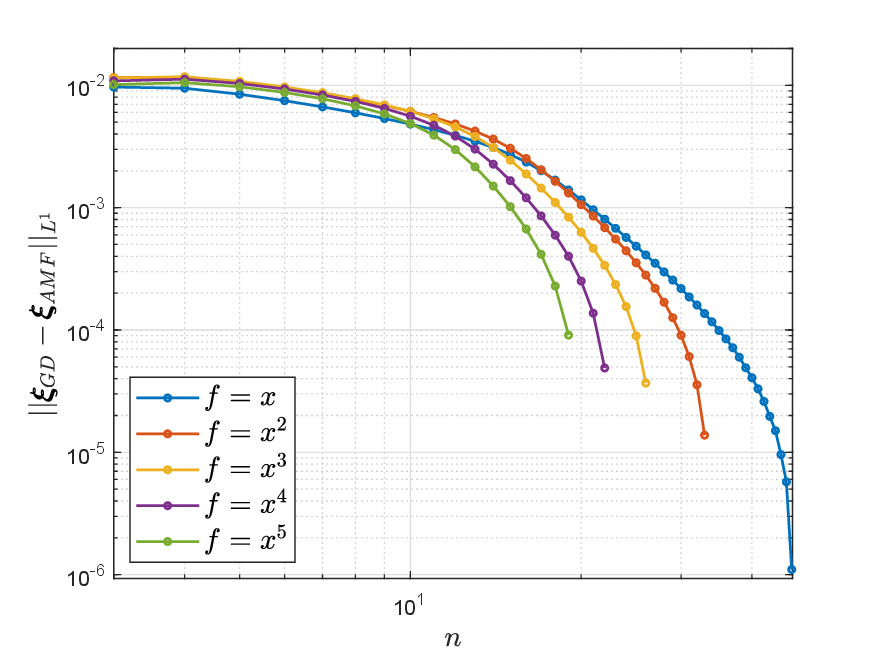}
    \includegraphics[width=0.49\linewidth]{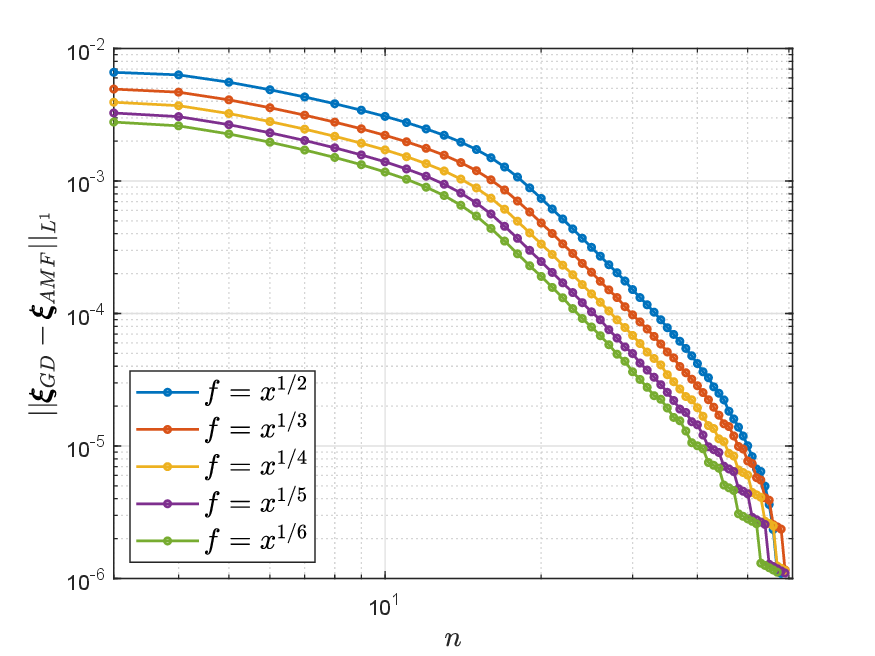}
    \caption{$L^1$-error between position of nodes obtained by  GD and AMF method for different rational functions.} 
    \label{fig:fig_f_polynom}
\end{figure}
Figure~\ref{fig:fig_f_polynom} shows the  $L^2$ error $\|\bx_{GD}-\bx_{AMF}\|_{L^2}$ for polynomial forcing terms of degree \( k \in \{1, 2, \ldots, 5\} \), as well as for root-type functions of order \( p \in \{2, 3, \ldots, 6\} \). For this class of functions, the discrepancy in node positions between the meshes generated by the AMF and GD methods decreases significantly and smoothly as the number of nodes increases.

\begin{figure}[h!]
    \centering
    \includegraphics[width=0.49\linewidth]{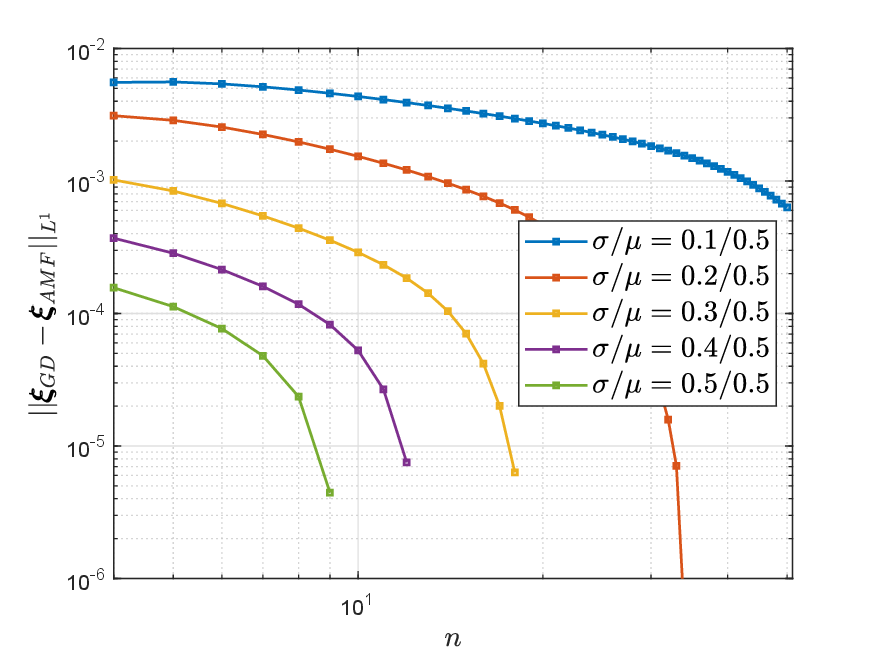}
    \caption{$L^1$-error between position of nodes obtained by  GD and AMF method for Gaussian functions with different variances.} 
    \label{fig:fig_sin_gauss}
\end{figure}

Figure~\ref{fig:fig_sin_gauss} shows the  $L^2$ error for  a parametrized Gaussian profile \( f = \Phi(\mu,\sigma;x) \) centered at \( \mu = 0.5 \) with different standard deviations \( \sigma \in \{0.1, 0.2, 0.3, 0.4, 0.5\} \). % and a sinusoidal forcing term \( f = \sin(2\pi F x) \) with varying frequency $F \in \{0.2,0.6,1\}$. For these functions, the discrepancy in node positions between the AMF and GD meshes decreases but with theirs owns signatures.

\medskip

For the first three class of functions, a quasi-monotonic decrease in the relative \( L^2 \)-error is observed as the number of nodes increases. This expected behavior reflects the effectiveness of the AMF method in approaching the optimal distribution obtained with the GD method as the mesh resolution increases. The quantitative comparison of the curves also allows for assessing the relative efficiency of both approaches depending on the nature of the forcing term.

\subsection*{Acknowledgements}
The authors thank Hidde Sch\"onberger for carefully proofreading the manuscript and for  helpful suggestions.

\appendix

\section{An auxiliary lemma}
\label{sec:aux_lem}

\begin{lemma}
\label{lem:weak_conv_conv}
  Let $I\subset\R$ be an interval, $(\mu_j)_{j\in\N}\subset \mathcal M(I)$ a sequence of (non-negative) measures converging narrowly, $\mu_j\wsto \mu\in\mathcal M(I)$. Let $f:[0,\infty)\to[0,\infty]$ be convex and decreasing such that $f^{-1}(+\infty)$ is closed, and $w\in C^0_c(I;[0,\infty))$. Then    
  \begin{equation}\label{eq:26}
    \liminf_{j\in\N}\int_I w\,f\left(\frac{\d \mu_j}{\d\L^1}\right)\d \L^1 \geq \int_I w\,f\left(\frac{\d \mu}{\d\L^1}\right)\d \L^1\,.
  \end{equation}
\end{lemma}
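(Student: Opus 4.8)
\textbf{Proof strategy for Lemma \ref{lem:weak_conv_conv}.}

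The plan is to reduce the statement to a standard lower semicontinuity result for convex functionals of measures by exploiting the structure of $f$ as a convex, decreasing function on $[0,\infty)$. First I would recall that, writing $\mu_j = \frac{\d\mu_j}{\d\L^1}\L^1 + (\mu_j)_s$ with $(\mu_j)_s \geq 0$ singular, the weighted functional $\nu \mapsto \int_I w\, f\bigl(\tfrac{\d\nu}{\d\L^1}\bigr)\d\L^1$ should be rewritten using the recession function of $f$. Since $f$ is convex and decreasing and takes values in $[0,\infty]$, its recession function $f^\infty(t) = \lim_{s\to\infty} f(st)/s$ satisfies $f^\infty(t) = 0$ for $t > 0$ (because a decreasing convex real-valued-at-infinity function grows sublinearly, and if $f \equiv +\infty$ past some point the relevant functional is $+\infty$ there anyway and the inequality is trivial on that part). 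Consequently the ``correct'' convex functional of the measure $\mu$, namely
\[
\mathcal G(\nu) := \int_I w\, f\Bigl(\tfrac{\d\nu}{\d\L^1}\Bigr)\d\L^1 + \int_I w\, f^\infty\Bigl(\tfrac{\d\nu_s}{\d|\nu_s|}\Bigr)\d|\nu_s|
\]
coincides with $\int_I w\, f(\tfrac{\d\nu}{\d\L^1})\d\L^1$ for nonnegative $\nu$, since the singular part contributes $f^\infty(1)|\nu_s|(I) = 0$. This identification is the conceptual crux: it shows the functional we care about is exactly of the form for which Reshetnyak-type / Ambrosio--Buttazzo lower semicontinuity theorems apply.

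Next I would invoke the classical result (see e.g. Ambrosio--Fusco--Pallara, \cite{MR1857292}, Theorem 2.34, or Buttazzo's book) that for a convex, lower semicontinuous integrand with its recession function, the functional $\mathcal G$ above is sequentially lower semicontinuous with respect to weak-* (narrow) convergence of nonnegative measures, provided the weight $w$ is a nonnegative continuous function with compact support in $I$ — which is exactly our hypothesis $w \in C^0_c(I;[0,\infty))$. The hypothesis that $f^{-1}(+\infty)$ is closed guarantees that $f$ is genuinely lower semicontinuous as an $[0,\infty]$-valued function (the only possible failure of lsc for a convex function on $\R$ is at the boundary of its effective domain, and closedness of the set where $f = +\infty$ rules this out). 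Combining the lower semicontinuity of $\mathcal G$ with the identity $\mathcal G(\mu_j) = \int w f(\tfrac{\d\mu_j}{\d\L^1})\d\L^1$ and $\mathcal G(\mu) = \int w f(\tfrac{\d\mu}{\d\L^1})\d\L^1$ yields \eqref{eq:26} directly.

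The main obstacle, and the step requiring the most care, is verifying that the hypotheses of the abstract lower semicontinuity theorem are met in the precise form we need — in particular that the vanishing recession function does not cause a loss of mass ``at infinity'' under narrow (as opposed to merely weak-*) convergence, and that the restriction to nonnegative measures is compatible. If one prefers to avoid citing the abstract machinery, an alternative self-contained route is a direct argument: approximate $f$ from below by an increasing sequence of functions of the form $f_k(t) = \sup_{1\le m \le k}(a_m - b_m t)_+$ with $a_m \ge 0$, $b_m \ge 0$ (possible since $f$ is convex, decreasing, nonnegative, and lsc), reduce by monotone convergence to each $f_k$, and for a single affine-truncated piece $t \mapsto (a - bt)_+$ observe that $\int_I w (a - b\tfrac{\d\mu_j}{\d\L^1})_+ \d\L^1 = \sup\{\int_I w(a\varphi - b\varphi\,\d\mu_j) : 0 \le \varphi \le 1,\ \varphi \in C^0_c(I)\}$ up to technicalities, which is a supremum of functionals each continuous under narrow convergence, hence lsc. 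I would present the argument via the abstract theorem for brevity, noting the affine-minorant approach as the elementary fallback.
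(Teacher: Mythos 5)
Your argument is correct in substance but follows a genuinely different route from the paper. The paper gives a self-contained blow-up proof: after extracting a subsequence along which $w\,f\bigl(\tfrac{\d\mu_j}{\d\L^1}\bigr)\,\L^1\wsto\tilde f$ for some nonnegative measure $\tilde f$, it establishes the pointwise density inequality $w(x_0)\,f\bigl(\tfrac{\d\mu}{\d\L^1}(x_0)\bigr)\leq\tfrac{\d\tilde f}{\d\L^1}(x_0)$ at Lebesgue points, using Jensen's inequality on small balls together with the monotonicity of $f$ to absorb the singular parts of the $\mu_j$ (adding singular mass only increases the argument of $f$, hence decreases its value), and then integrates. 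You instead recognize the functional as a Goffman--Serrin/Reshetnyak-type convex functional of measures: since $f$ is decreasing and nonnegative, its recession function vanishes on the positive ray, so the singular term in the relaxed functional drops out and the abstract lower semicontinuity theorem gives the claim directly; the weight $w\in C^0_c(I;[0,\infty))$ is handled either by the $x$-dependent versions of that theorem or by the layer-cake reduction to open sets, which you should make explicit. This is a legitimate and arguably more conceptual proof --- the observation that $f^\infty\equiv 0$ on $(0,\infty)$ is precisely the structural reason the lemma holds --- at the price of importing a nontrivial theorem whose proof is essentially the blow-up argument the paper carries out by hand. Your affine-minorant fallback is the classical duality proof of that same theorem and would also work, with the care you already flag needed to keep the test functions away from the singular set of $\mu$.

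One caveat, which you share with the paper rather than introduce: the abstract theorem requires $f$ to be lower semicontinuous, and your claim that closedness of $f^{-1}(+\infty)$ guarantees this is not quite right. A convex decreasing $f:[0,\infty)\to[0,\infty]$ can still jump down as $t$ increases past the boundary of its effective domain; for instance $f=+\infty$ on $[0,1]$ and $f(t)=(2-t)_+$ for $t>1$ is convex, decreasing, has closed $f^{-1}(+\infty)$, and violates the stated inequality (take $\mu_j$ with constant density $1+1/j$ on $I=[0,1]$, so the left-hand side tends to $\int_I w\,\d\L^1$ while the right-hand side is $+\infty$). The same example defeats the paper's final continuity step, so lower semicontinuity (equivalently, right-continuity of the decreasing function $f$) should be an explicit hypothesis; it holds trivially for the only integrand to which the lemma is applied, $f(t)=t^{-2}$.
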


\begin{proof}
We may assume $\liminf_{j\in\N}\int_I w\,f\left(\frac{\d \mu_j}{\d\L^1}\right)\d \L^1<\infty$, which implies in particular $f(\d\mu/\d\L^1)<+\infty$ $w\L^1$-almost everywhere. After passing to a subsequence, we may suppose that the $\liminf$ is actually a limit. After passing to a further subsequence, we may assume that there exists a non-negative $\tilde f\in \mathcal M(I)$  with
\[
 w\,f\left(\frac{\d \mu_j}{\d\L^1}\right)\L^1\wsto \tilde f\,.
\]
Let us decompose $\mu,\tilde f$ into their Lebesgue regular and singular parts, 
\newcommand{\ac}{{\mathrm{ac}}}
\[
    \mu=\frac{\d\mu}{\d\L^1} \L^1+ \mu_s\,,\qquad
    \tilde f=\frac{\d\tilde f}{\d\L^1} \L^1+ \tilde f_s\,.
\]
For almost every $x_0$, we have that 
\[
    \lim_{r\to 0} \frac{1}{2r}\int_{B(x_0,r)} \d\tilde f = \frac{\d\tilde f}{\d\L^1}(x_0)\,,\qquad    \lim_{r\to 0} \frac{1}{2r}\int_{B(x_0,r)} \d\mu =\frac{\d\mu}{\d\L^1}(x_0)\,.
\]

In order to show \eqref{eq:26}, it suffices to show 
  \begin{equation}\label{eq:33}
    w(x_0) f\left(\frac{\d\mu}{\d\L^1}(x_0)\right)\leq \frac{\d\tilde f}{\d\L^1}(x_0)\,
  \end{equation}
for $w\L^1$-almost every $x_0$. In order to prove the latter, let $\e>0$. Choose $r>0$ such that $\mu(\partial B(x_0,r))=\tilde f(\partial B(x_0,r))=0$ and
\[
\frac{\d\tilde f}{\d\L^1}(x_0)> \frac{1}{2r}\int_{B(x_0,r)} \d\tilde f-\e\,,
\]
Then choose $j_0$ such that for $j>j_0$,
  \begin{equation}\label{eq:36}
    \begin{split}
      \frac{1}{2r}\int_{B(x_0,r)} \d\tilde f&\geq \fint_{B(x_0,r)} w\,f\left(\frac{\d\mu_j}{\d\L^1}\right)\d \L^1-\e \\
      w(x_0)f\left(\frac{1}{2r}\int_{B(x_0,r)}\d\mu_j\right)&\geq w(x_0) f\left(\frac{\d\mu}{\d\L^1}(x_0)\right)-\e\,.
    \end{split}
  \end{equation}
  The latter inequality can be achieved since we may assume that $f$ is finite and hence continuous in a neighborhood of $\d\mu/\dL^1(x_0)$.
  %This is not very detailed: We're using continuity of $f$ here, which only holds in neighborhoods where $f$ is finite. Points of discontinuity should be treated separateley.
  By possibly decreasing $r$, we have in addition to the previous relations
  \[
  \fint_{B(x_0,r)} w\,f\left(\frac{\d\mu_j}{\d\L^1}\right)\d \L^1\geq w(x_0) \fint_{B(x_0,r)} f\left(\frac{\d\mu_j}{\d\L^1}\right)\d \L^1-\e\,.
  \]
By Jensens's inequality,  
\[
\fint_{B(x_0,r)} f\left(\frac{\d\mu_j}{\d\L^1}\right)\d \L^1\geq f\left(\fint_{B(x_0,r)}\frac{\d\mu_j}{\d\L^1}\d\L^1\right)\,.
\]
By the monotonicity of $f$, 
\[
  \begin{split}
    f\left(\fint_{B(x_0,r)}\frac{\d\mu_j}{\d\L^1}\d\L^1\right)&\geq 
    f\left(\fint_{B(x_0,r)}\frac{\d\mu_j}{\d\L^1}\d\L^1+\frac{1}{2r}\int_{B(x_0,r)} \d(\mu_j)_s\right)\\
&=f\left(\frac{1}{2r}\int_{B(x_0,r)}\d\mu_j\right)\,,
  \end{split}
\]
where $(\mu_j)_s$ is the Lebesgue singular part of $\mu_j$, and $j>j_0$.
% Finally, 
% \[
% f\left(\frac{1}{2r}\int_{B(x_0,r)}\d\mu_j\right)\geq f\left(\frac{\d\mu}{\d\L^1}(x_0)\right)-\e\,.
% \]
Putting all of the above together, we obtain 
\[
w(x_0) f\left(\frac{\d\mu}{\d\L^1}(x_0)\right)-4\e< \frac{\d\tilde f}{\d\L^1}(x_0) \,,
\]
which proves \eqref{eq:33} in the limit $\e\to 0$.
\end{proof}

\bibliographystyle{alpha}
\bibliography{grid}

\end{document}